\def\N{{\mathbb{N}}}
\def\R{{\mathbb{R}}}
\def\S{{\mathbb{S}}}
\begin{document}
\title[Proof of Pontryagin principle]{On the proof of Michel of the Maximum Pontryagin principle}
\author[J. Blot, H. Yilmaz]{ Jo${\rm \ddot e}$l Blot \& Hasan Yilmaz}

\address{Jo\"{e}l Blot: Laboratoire SAMM EA 4543,\newline
Universit\'{e} Paris 1 Panth\'{e}on-Sorbonne, centre P.M.F.,\newline
90 rue de Tolbiac, 75634 Paris cedex 13,
France.}
\email{blot@univ-paris1.fr}
\address{Hasan Yilmaz: Laboratoire LPSM UMR 8001 \newline
Universit\'e Paris-Diderot, Sorbonne-Paris-Cit\'e \newline
b\^atiment Sophie Germain, 8 place Aur\'elie Nemours, \newline
75013 Paris, France.}
\email{yilmaz@lpsm.paris}
\date{April, 1, 2019}
\numberwithin{equation}{section}
\newtheorem{theorem}{Theorem}[section]
\newtheorem{lemma}[theorem]{Lemma}
\newtheorem{example}[theorem]{Example}
\newtheorem{remark}[theorem]{Remark}
\newtheorem{definition}[theorem]{Definition}
\newtheorem{corollary}[theorem]{Corollary}
\newtheorem{proposition}[theorem]{Proposition}
\thispagestyle{empty} \setcounter{page}{1}
\begin{abstract}
We provide an improvment of the maximum principle of Pontryagin of the optimal control problems, for a system governed by an ordinary differential equation, in presence of final constraints, in the setting of the piecewise differentiable state functions (valued in a Banach space) and of piecewise continuous control functions (valued in a metric space). As Michel we use the needlelike variations, but we introduce tools of functional analysis and a recent multiplier rule of the static optimization to make our proofs.
\end{abstract}
\maketitle
\vskip3mm
\noindent
{\bf Mathematical  Subject Classification 2010}: 49K15, 47H10\\
{\bf Key Words}: Pontryagin maximum principle, piecewise continuous functions, fixed point theorem
\vskip4mm
\section{Introduction}
The paper deals with the maximum principle of Pontryagin for a problem of Bolza in the following form.
\[
({\mathcal B})
\left\{
\begin{array}{cl}
{\rm Maximize} & \int_0^T f^0(t,x(t),u(t))dt + g^0(x(T)) \\
{\rm subject \;  to} & x \in PC^1([0,T], \Omega), u \in PC^0([0,T], U)\\
\null & x'(t) = f(t,x(t), u(t)), \; x(0) = \xi_0\\
\null & \forall \alpha = 1,...,m, \; \; g^{\alpha}(x(T)) \geq 0\\
\null & \forall \beta = 1,..., q, \; \; h^{\beta}(x(T)) = 0.
\end{array}\right.
\]
In the special case where $f^0$ is equal to zero, the problem is called a problem of Mayer and it is denoted by (${\mathcal M}$). $T \in (0, + \infty)$ is fixed. $E$ denotes a real Banach space, $\Omega$ is a nonempty open subset of $E$, $U$ denotes a nonempty metric space, and $\xi_0 \in \Omega$ is fixed; we use the mappings $f^0 : [0,T] \times \Omega \times U \rightarrow \R$ and $f : [0,T] \times \Omega \times U \rightarrow E$. The real valued functions $g^{\alpha}$ and $h^{\beta}$ are defined on $\Omega$, and $m$ and $q$ are fixed integer numbers.\\
$PC^0([0,T], U)$ denotes the space of the piecewise continuous functions from $[0,T]$ into $U$, and $PC^1([0,T], \Omega)$ denotes the space of the piecewise differentiable functions from $[0,T]$ into $\Omega$. The precise definitions of these notions are given in Section 2.
\vskip2mm
\noindent
When $(x,u)$ is an admissible process for $(\mathcal{B})$ or $(\mathcal{M})$, we consider the following condition of qualification, $i \in \{0,1 \}$. (QC, 0) is due to Michel, \cite{Mi2}. 
\[
(QC, i)
\left\{
\begin{array}{l}
{\rm If} \;\; (c_{\alpha})_{i \leq \alpha \leq m} \in \R_+^{1 - i+ m}, (d_{\beta})_{1 \leq \beta \leq q} \in \R^q {\rm \;\;\;satisfy} \\
(\forall \alpha = 1,...,m, \; c_{\alpha} g^{\alpha}(x(T)) = 0), {\rm and} \\
\sum_{\alpha = i}^m c_{\alpha} D g^{\alpha}(x(T)) + \sum_{\beta = 1}^q d_{\beta} D h^{\beta}(x(T)) = 0, {\rm then}\\
(\forall \alpha = i, ...,m, \;  c_{\alpha} = 0) \;\; {\rm and} \;\; (\forall \beta = 1, ..., q, \; d_{\beta} = 0).
\end{array}
\right.
\]
\noindent
The main theorems of the paper are the following ones.
\vskip2mm
\begin{theorem}\label{th11} Let $(x_0,u_0)$ be a solution of problem (${\mathcal B}$). We assume that the following assumptions are fulfilled.
\begin{itemize}
\item[(A1)] For all $\alpha \in \{0,...,m\}$, $g^{\alpha}$ is Fr\'echet differentiable at $x_0(T)$.
\item[(A2)] For all $\beta \in \{1,...,q \}$, $h^{\beta}$ is continuous on a neighborhood of $x_0(T)$ and is Fr\'echet differentiable at $x_0(T)$.
\item[(A3)] $f^0$ is continuous on $[0,T] \times \Omega \times U$, the partial differential with respect to the second vector variable $D_2f^0(t,\xi, \zeta)$ exists for all $(t, \xi, \zeta) \in [0,T] \times \Omega \times U$, and $D_2f^0$ is continuous on $[0,T] \times \Omega \times U$.
\item[(A4)] $f$ is continuous on $[0,T] \times \Omega \times U$, the partial differential with respect to the second vector variable $D_2f(t,\xi, \zeta)$ exists for all $(t, \xi, \zeta) \in [0,T] \times \Omega \times U$, and $D_2f$ is continuous on $[0,T] \times \Omega \times U$.
\end{itemize}
Then there exists $(\lambda_{\alpha})_{0 \leq \alpha \leq m} \in \R^{1 + m}$, $(\mu_{\beta})_{1 \leq \beta \leq q} \in \R^q$ and $p \in PC^1([0,T], E^*)$ which satisfy the following conditions.\\
{\bf Part (I)}
\begin{itemize}
\item[(NN)] $(\lambda_{\alpha})_{0 \leq \alpha \leq m}$ and $(\mu_{\beta})_{1 \leq \beta \leq q}$ are not simultaneously equal to zero.
\item[(Si)] For all $\alpha \in \{0,...,m \}$, $\lambda_{\alpha} \geq 0$.
\item[(S${\ell}$)] For all $\alpha \in \{1,...,m \}$, $\lambda_{\alpha} g^{\alpha}(x_0(T)) = 0$.
\item[(TC)] $\sum_{\alpha = 0}^{m} \lambda_{\alpha} Dg^{\alpha}(x_0(T)) + \sum_{\beta = 1}^q \mu_{\beta} D h^{\beta}(x_0(T)) = p(T)$.
\item[(AE.B)] $p'(t) = - D_2 H_B(t, x_0(t),u_0(t), p(t), \lambda_0)$ for all $t \in [0,T]$ except at most when $t$ is a discontinuity point of $u_0$.
\item[(MP.B)] For all $t \in [0,T]$, for all $\zeta \in U$, \\
$H_B(t, x_0(t), u_0(t), p(t), \lambda_0) \geq H_B(t, x_0(t), \zeta, p(t), \lambda_0)$.
\item[(CH.B)] $\bar{H}_B := [ t \mapsto H_B(t, x_0(t), u_0(t), p(t), \lambda_0)] \in C^0([0,T], \R)$.
\end{itemize}
{\bf Part (II)} If in addition we assume that, for all $(t,\xi, \zeta) \in [0,T] \times \Omega \times U$, the partial derivatives with respect to the first variable $\partial_1f^0(t, \xi, \zeta)$ and $\partial_1f(t, \xi, \zeta)$ exist and $\partial_1f^0$ and $\partial_1f$ are continuous on $[0,T] \times \Omega \times U$, then $\bar{H}_B \in PC^1([0,T], \R)$ and, for all $t \in [0,T]$ which is a continuity point of $u_0$, $\bar{H}_B'(t) = \partial_1 H_B(t, x_0(t), u_0(t), p(t), \lambda_0)$.\\
{\bf Part (III)} If we assume that (QC, 1) is fulfilled for $(x,u) = (x_0,u_0)$ then, for all $t \in [0,T]$, $(\lambda_0, p(t))$ is never equal to zero.
\end{theorem}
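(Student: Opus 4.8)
The plan is to follow Michel's method of needlelike variations but to recast the endpoint analysis as a finite-dimensional static optimization problem so that the multiplier rule can be applied directly. First I would fix a finite family of continuity points $\tau_1 < \dots < \tau_N$ of $u_0$ together with control values $\zeta_1, \dots, \zeta_N \in U$, and for small nonnegative parameters $\sigma = (\sigma_1, \dots, \sigma_N)$ build the associated multiple needle (spike) perturbation $u_\sigma$ of $u_0$: on an interval of length $\sigma_j$ abutting $\tau_j$ the control takes the value $\zeta_j$, while elsewhere it equals $u_0$. Using (A4), the well-posedness of the state equation in the Banach space $E$, and a Gronwall estimate, one shows that the corresponding state $x_\sigma$ is differentiable in $\sigma$ at $\sigma = 0$ and that the endpoint map $\sigma \mapsto x_\sigma(T)$ has a one-sided (Gâteaux) derivative represented by $w(T)$, where $w$ solves the linear variational equation $w'(t) = D_2 f(t, x_0(t), u_0(t)) w(t)$ between the needle times, with the accumulated jumps $w(\tau_j^+) - w(\tau_j^-) = \sigma_j [f(\tau_j, x_0(\tau_j), \zeta_j) - f(\tau_j, x_0(\tau_j), u_0(\tau_j))]$. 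The cost is treated identically by augmenting the state with the running-cost coordinate generated by $f^0$, using (A3).

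Next I would regard the finitely many real functions $\sigma \mapsto J(\sigma)$ (augmented cost), $\sigma \mapsto g^\alpha(x_\sigma(T))$ and $\sigma \mapsto h^\beta(x_\sigma(T))$ as a static optimization problem on the cone $\R_+^N$, whose solution is $\sigma = 0$ by the optimality of $(x_0, u_0)$. By (A1)--(A2) these functions are differentiable at $0$, so the recent multiplier rule yields reals $\lambda_\alpha \geq 0$ for $\alpha = 0, \dots, m$ and $\mu_\beta$ for $\beta = 1, \dots, q$, not all zero, satisfying the sign condition (Si) and the slackness condition (S$\ell$), and such that the corresponding weighted sum of endpoint differentials annihilates every admissible direction $w(T)$ arising from the chosen needles. \textbf{The main obstacle} is that this multiplier vector a priori depends on the finite family of needles; I would remove that dependence by normalizing the multiplier vector on the unit sphere of the finite-dimensional space $\R^{1+m} \times \R^q$ and extracting a limit along a refining sequence of needle families, thereby producing a single multiplier system valid simultaneously for all needles and still nontrivial, which gives (NN). Ensuring the variational estimates are uniform enough across the piecewise, Banach-valued setting to justify this passage to a common multiplier is the delicate point.

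Once the multipliers are fixed I would define $p$ as the solution of the backward linear adjoint equation (AE.B), namely $p'(t) = -D_2 H_B(t, x_0(t), u_0(t), p(t), \lambda_0)$, with the terminal value $p(T) = \sum_{\alpha} \lambda_\alpha Dg^\alpha(x_0(T)) + \sum_\beta \mu_\beta Dh^\beta(x_0(T))$, which is exactly (TC). A standard adjoint computation, pairing $p$ against the variational equation for $w$ and integrating over $[\tau, T]$, converts the annihilation condition into the pointwise inequality $\langle p(\tau), f(\tau, x_0(\tau), \zeta) - f(\tau, x_0(\tau), u_0(\tau)) \rangle + \lambda_0 (f^0(\tau, x_0(\tau), \zeta) - f^0(\tau, x_0(\tau), u_0(\tau))) \leq 0$ at every continuity point $\tau$ and every $\zeta \in U$; this is (MP.B), first at continuity points and then everywhere by taking one-sided limits of $u_0$. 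The continuity (CH.B) of $\bar H_B$ then follows from (MP.B) and the continuity of the data, since the maximized Hamiltonian is continuous and coincides with $\bar H_B$ along the optimal pair.

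Finally, Part (II) is obtained by differentiating $t \mapsto H_B(t, x_0(t), u_0(t), p(t), \lambda_0)$ on each interval of continuity of $u_0$: the contributions of $x_0'$ and $p'$ cancel through the state and adjoint equations by an envelope-type identity, leaving only the explicit dependence $\partial_1 H_B$, whose existence and continuity is exactly what the additional hypotheses on $\partial_1 f^0$ and $\partial_1 f$ supply. Part (III) is a direct consequence of (QC, 1): if $\lambda_0 = 0$ and $p(t_0) = 0$ for some $t_0 \in [0,T]$, then (AE.B) becomes a homogeneous linear equation in $p$, so uniqueness forces $p \equiv 0$ and hence $p(T) = 0$; then (TC) reads $\sum_{\alpha=1}^m \lambda_\alpha Dg^\alpha(x_0(T)) + \sum_\beta \mu_\beta Dh^\beta(x_0(T)) = 0$ together with (S$\ell$), and (QC, 1) forces all $\lambda_\alpha$ and all $\mu_\beta$ to vanish, contradicting (NN). Therefore $(\lambda_0, p(t))$ is never equal to zero.
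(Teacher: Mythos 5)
Your overall architecture is the same as the paper's: needlelike variations, reduction to a finite-dimensional static problem, a multiplier rule, normalization of the multipliers, and an adjoint function defined from the terminal data; the paper merely organizes it by proving the Mayer case (Theorem \ref{th12}) first and then deducing the Bolza case through exactly your running-cost augmentation. The genuine gap is in the step you yourself flag as delicate. For a fixed finite needle family the multiplier rule produces multipliers depending on that family, and you propose to remove the dependence by ``extracting a limit along a refining sequence of needle families.'' A refining \emph{sequence} of finite families involves only countably many needle pairs $(t_i,v_i)$, and the limit multiplier (obtained from compactness of the unit sphere and closedness of the wide inequalities) inherits the needle inequality only for the pairs lying in the union of that sequence. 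But (MP.B) must hold for every $t\in[0,T]$ and every $\zeta\in U$, an uncountable family of constraints; to pass from your countable set to all of $(0,T)\times U$ you would need that countable set to be dense and a continuity argument in $\zeta$, and $U$ is an arbitrary metric space, \emph{not assumed separable}, so no countable family of needles is dense in $(0,T)\times U$ in general. The step fails as stated. The paper's remedy is built precisely for the uncountable index set: for each $S\in\S$ it defines the set $K(S)\subset\Sigma(0,1)$ of normalized multipliers satisfying (a)--(d) of Lemma \ref{lem41}; each $K(S)$ is closed, finitely many families can be merged into a single family $\bar S$ so that $K(\bar S)\subset\bigcap_{\ell}K(S^{\ell})\neq\emptyset$, and the finite intersection property of closed subsets of the compact sphere gives $\bigcap_{S\in\S}K(S)\neq\emptyset$. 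Your sequential extraction is the countable shadow of this argument; it becomes correct only under an added separability hypothesis on $U$ plus a density/continuity argument, or by replacing sequences with the finite-intersection (equivalently, subnet) argument.

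A secondary but real issue: the multiplier rule you invoke is a rule for data defined on a neighborhood of the optimum and Fr\'echet differentiable there, whereas your static problem is posed on the cone $\R^N_+$ and you only establish a one-sided (G\^ateaux) derivative of $\sigma\mapsto x_\sigma(T)$ at $0$, which lies on the boundary of the cone. This is exactly why the paper proves Proposition \ref{prop310}: using the $1$-Lipschitz metric projection of $\R^N$ onto $\R^N_+$, it extends $a\mapsto x_a(T)$ to a mapping $\kappa$ on a full ball $\overline{B}(0,r_4)$ that is Fr\'echet differentiable at $0$ with $D\kappa(0)=\Lambda$ (this uses the continuity of $a\mapsto x_a$ coming from the parametric fixed point theorem), and it replaces the cone constraint by the inequality constraints $b^*_ia\geq 0$, yielding the problem $(\mathcal{F}^1_S)$ to which the rule of \cite{Bl} applies; Michel instead used a cone multiplier rule \cite{Mi1}. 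Without one of these devices the application of the multiplier rule in your second step is not justified. The remainder of your plan (definition of $p$ by the backward adjoint equation with terminal value (TC), the pairing argument giving (MP.B), and Parts (II) and (III)) matches the paper's proof, with Lemma \ref{lem43} supplying the rigorous version of the envelope-type differentiation you describe, which is needed because $u_0$ cannot be differentiated in the metric space $U$.
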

In this statement, $E^*$ denotes the topological dual space of $E$, (NN) is a condition of non nullity, (Si) is a sign condition, (S${\ell}$) is a slackness condition, (TC) is the transversality condition, (AE.B) is the adjoint equation where the Hamiltonian of the problem of Bolza is defined as $H_B(t,x,u,p, \lambda) := \lambda f^0(t,x,u) + p \cdot f(t,x,u)$. (MP.B) is the maximum principle and (CH.B) is a condition of continuity on the Hamiltonian.
\begin{theorem}\label{th12} Let $(x_0,u_0)$ be a solution of $(\mathcal{M})$. Under (A1), (A2), and (A4) there exist $(\lambda_{\alpha})_{0 \leq \alpha \leq m} \in \R^{1+m}$, $(\mu_{\beta})_{1 \leq \beta \leq q} \in \R^q$ and $p \in PC^1([0,T], E^*)$ such that the following conditions hold.\\
{\bf Part (I)}
\begin{itemize}
\item[(NN)] $(\lambda_{\alpha})_{0 \leq \alpha \leq m}$ and $(\mu_{\beta})_{1 \leq \beta \leq q}$ are not simultaneously equal to zero.
\item[(Si)] For all $\alpha \in \{0,...,m \}$, $\lambda_{\alpha} \geq 0$.
\item[(S${\ell}$)] For all $\alpha \in \{1,...,m \}$, $\lambda_{\alpha} g^{\alpha}(x_0(T)) = 0$.
\item[(TC)] $\sum_{\alpha = 0}^{m} \lambda_{\alpha} Dg^{\alpha}(x_0(T)) + \sum_{\beta = 1}^q \mu_{\beta} D h^{\beta}( x_0(T)) = p(T)$.
\item[(AE.M)] $p'(t) = - D_2 H_M(t, x_0(t),u_0(t), p(t))$ for all $t \in [0,T]$ except at most when $t$ is a discontinuity point of $u_0$.
\item[(MP.M)] For all $t \in [0,T]$, for all $\zeta \in U$, \\
$H_M(t, x_0(t), u_0(t), p(t)) \geq H_M(t, x_0(t), \zeta, p(t))$.
\item[(CH.M)] $\bar{H}_M := [ t \mapsto H_M(t, x_0(t), u_0(t), p(t))] \in C^0([0,T], \R)$.
\end{itemize}
{\bf Part (II)} If we assume that, for all $(t,\xi, \zeta) \in [0,T] \times \Omega \times U$, the partial derivative with respect to the first variable $\partial_1f(t, \xi, \zeta)$ exists and $\partial_1f$ is continuous on $[0,T] \times \Omega \times U$, then we have $\bar{H}_M \in PC^1([0,T], \R)$ and, for all $t \in [0,T]$ which is a continuity point of $u_0$, $ \bar{H}_M'(t) = \partial_1 H_M(t, x_0(t), u_0(t), p(t))$.\\
{\bf Part (III)} If in addition of (A1), (A2), (A4), we assume that (QC, 0) is fulfilled when $(x,u) = (x_0,u_0)$, then $p(t)$ is never equal to zero when $t \in [0,T]$.
\end{theorem}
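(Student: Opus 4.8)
The plan is to deduce Parts (I) and (II) of Theorem \ref{th12} from Theorem \ref{th11} by the specialization $f^0 := 0$, and to establish Part (III) by a direct argument exploiting the linearity of the Mayer adjoint system. Setting $f^0 \equiv 0$, the Bolza cost $\int_0^T f^0(t,x(t),u(t))\,dt + g^0(x(T))$ collapses to $g^0(x(T))$, so $(\mathcal{B})$ coincides with $(\mathcal{M})$ and the given solution $(x_0,u_0)$ of $(\mathcal{M})$ is a solution of $(\mathcal{B})$. With $f^0 \equiv 0$ the assumption (A3) (continuity of $f^0$ and existence and continuity of $D_2 f^0$) holds trivially, so (A1), (A2), (A4) suffice to invoke Theorem \ref{th11}. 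Since $H_B(t,x,u,p,\lambda) = \lambda f^0(t,x,u) + p \cdot f(t,x,u) = p \cdot f(t,x,u) = H_M(t,x,u,p)$, we get $D_2 H_B = D_2 H_M$ and $\bar H_B = \bar H_M$, so the conclusions (NN), (Si), (S$\ell$), (TC), (AE.B), (MP.B), (CH.B) of Theorem \ref{th11} become verbatim (NN), (Si), (S$\ell$), (TC), (AE.M), (MP.M), (CH.M); this is Part (I). For Part (II), the hypothesis that $\partial_1 f$ exists and is continuous, together with $\partial_1 f^0 \equiv 0$, supplies the full Part (II) hypothesis of Theorem \ref{th11}; and since $\partial_1 H_B = p \cdot \partial_1 f = \partial_1 H_M$ when $f^0 \equiv 0$, the conclusion $\bar H_M \in PC^1([0,T],\R)$ with $\bar H_M'(t) = \partial_1 H_M(t,x_0(t),u_0(t),p(t))$ at continuity points of $u_0$ follows.

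Part (III) cannot be read off from Part (III) of Theorem \ref{th11}, because there the qualification (QC, 1) delivers only that $(\lambda_0,p(t))$ never vanishes, whereas here we assume the stronger (QC, 0) and claim the sharper conclusion $p(t) \neq 0$. The key structural fact I would use is that, since $f^0 \equiv 0$, the adjoint equation (AE.M) has the form $p'(t) = -D_2 H_M(t,x_0(t),u_0(t),p(t)) = -\,p(t) \circ D_2 f(t,x_0(t),u_0(t))$, which is linear and homogeneous in $p$ and carries no $\lambda_0$-term (this is precisely why $\lambda_0$ disappears from the nonvanishing statement). Suppose, for contradiction, that $p(t_0) = 0$ for some $t_0 \in [0,T]$. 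Being of class $PC^1$, $p$ is continuous, and on each subinterval of continuity of $u_0$ it solves the above homogeneous linear differential equation with the piecewise continuous coefficient $t \mapsto D_2 f(t,x_0(t),u_0(t))$; uniqueness for linear equations (Gronwall) forces $p$ to vanish on the subinterval containing $t_0$, and propagation across the finitely many breakpoints by continuity yields $p \equiv 0$ on $[0,T]$.

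With $p \equiv 0$ the transversality condition (TC) reduces to $\sum_{\alpha = 0}^m \lambda_\alpha Dg^\alpha(x_0(T)) + \sum_{\beta = 1}^q \mu_\beta Dh^\beta(x_0(T)) = p(T) = 0$. I would then apply (QC, 0) with $c_\alpha := \lambda_\alpha$ for $0 \le \alpha \le m$ and $d_\beta := \mu_\beta$ for $1 \le \beta \le q$: condition (Si) gives $(\lambda_\alpha)_{0 \le \alpha \le m} \in \R_+^{1+m}$, (S$\ell$) gives $\lambda_\alpha g^\alpha(x_0(T)) = 0$ for $\alpha = 1,\dots,m$, and the displayed identity is exactly the gradient hypothesis of (QC, 0). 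Hence (QC, 0) forces $\lambda_\alpha = 0$ for all $\alpha = 0,\dots,m$ and $\mu_\beta = 0$ for all $\beta = 1,\dots,q$, contradicting (NN); therefore $p(t) \neq 0$ for every $t \in [0,T]$. The only real obstacle is this Part (III): one must observe that the Mayer adjoint system is homogeneous in $p$ (so a single zero of $p$ forces $p \equiv 0$), and that it is the admissibility of $\alpha = 0$ in (QC, 0) — available precisely because $\lambda_0$ has dropped out of the dynamics — that allows one to contradict (NN) and obtain $p(t) \neq 0$ instead of merely $(\lambda_0, p(t)) \neq 0$.
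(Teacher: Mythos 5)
Your deduction of Parts (I) and (II) from Theorem \ref{th11} with $f^0 \equiv 0$ is circular in the context of this paper: the paper's only proof of Theorem \ref{th11} (Section 5) is itself obtained by reducing the Bolza problem to an auxiliary Mayer problem $(\mathcal{MB})$ via state augmentation and then invoking Theorem \ref{th12}. The logical order is the reverse of the one you assume: Theorem \ref{th12} is the theorem proved from scratch (Sections 3 and 4: needlelike variations, the fixed point theorem with parameters, the multiplier rule of \cite{Bl} applied to the finite-dimensional reduced problem $(\mathcal{F}_S^1)$, and the finite-intersection/compactness argument on the sets $K(S)$ that produces multipliers independent of the needle data $S$), and Theorem \ref{th11} is a corollary of it. So although the implication you describe is sound as an implication (with $f^0 \equiv 0$, (A3) is trivial, $H_B = H_M$, and the Bolza conclusions specialize verbatim to the Mayer ones), it cannot serve as a proof of Theorem \ref{th12}: you would first need an independent proof of Theorem \ref{th11}, which neither you nor the paper supplies. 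All of the real content of Theorem \ref{th12} --- the existence of the multipliers and of $p$, (NN), (TC), (AE.M), (MP.M), (CH.M) --- remains unproved in your proposal.

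Your Part (III), by contrast, is correct and is essentially the paper's own argument: supposing $p(t_0) = 0$, the Mayer adjoint equation $p'(t) = -\,p(t) \circ D_2 f(t,x_0(t),u_0(t))$ is linear and homogeneous, so uniqueness for the Cauchy problem (propagated across the finitely many corners by continuity of $p$) gives $p \equiv 0$, hence $p(T) = 0$; then (TC) becomes exactly the hypothesis of (QC, 0) with $c_\alpha = \lambda_\alpha$ and $d_\beta = \mu_\beta$, the sign and slackness requirements being supplied by (Si) and (S$\ell$), and (QC, 0) forces all multipliers to vanish, contradicting (NN). You also correctly identified why the Mayer statement can use (QC, 0) and conclude $p(t) \neq 0$ rather than only $(\lambda_0, p(t)) \neq 0$: no $\lambda_0$ enters the Mayer adjoint dynamics. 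But this part is a self-contained deduction from the Part (I) conclusions, and Part (I) is precisely what your proposal leaves unestablished.
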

In this statement the Hamiltonian of the problem of Mayer is defined as\\
 $H_M(t,x,u,p) := p f(t,x,u)$.
\vskip2mm
\noindent
To prove these statements, we build a variation of the proof of Michel \cite{Mi2} (on the problem of Mayer) by introducing functional analytic arguments. Notably we consider special function spaces of piecewise continuous functions, operators on these function spaces and  fixed point theorems. We also use a recent result of multiplier rule on  static optimization problems. The main contributions of the paper are the following ones.
\begin{itemize}
\item Our assumptions on the $g^{\alpha}$ are only their Fr\'echet diffferentiability, and on the $h^{\beta}$ are their continuity and their Fr\'echet differentiability, not their continuous differentiability as in \cite{Mi2}, \cite{ATF} (p. 321)  and \cite{IT} (p. 132).
\item In \cite{ATF}( p. 321) and \cite{IT} (p. 132) the first conclusion of the theorem of Pontryagin is that $(\lambda_{\alpha})_{0 \leq \alpha \leq m}$, $(\mu_{\beta})_{1 \leq \beta \leq q}$, and $p$ are not simultaneously equal to zero. In our Theorem \ref{th11}, the first conclusion is that $(\lambda_{\alpha})_{0 \leq \alpha \leq m}$ and $(\mu_{\beta})_{1 \leq \beta \leq q}$ are not simultaneously equal to zero; it is an improvment.
\item As in \cite{Mi2} we do not demand the finiteness of the dimension of the space $E$; in \cite{ATF} and in \cite{IT}, $E$ is finite-dimensional. Moreover we use an open subset of $E$ instead of $E$; it is another difference with \cite{Mi2}. Ever about \cite{Mi2}, we prove a condition of continuity of the needlike variations with respect to the thickness of the needles, which is useful but omitted in \cite{Mi2}.
\end{itemize}
Note that  there exist statements of theorem of Pontryagin without assumptions of continuous differentiablity, by using locally Lipschitzean mappings and generalized differential calculus on these mappings, e.g. in \cite{CLSW}. A mapping which is Fr\'echet differentiable at a point is not necessarily locally Lipschitzean, and conversely a mapping which is locally Lipschitzean is not necessarily Fr\'echet  differentiable at a given point; hence our result is not comparable with the statements of the locally Lipschitzean setting.
%%%%%%%%%%%%%%%%%%%%%%%%%%%%%%%%%%%%%%%%%%%%%%%%%%%%%%%%%%%%%%%%%%%%%%%%%%%%%%%%%%%%%%%
\section{Function spaces}
When $X$ and $Y$ are metric spaces, $C^0(X,Y)$ denotes the space of the continuous mappings from $X$ into $Y$. When $X$ is an open subset of a real normed vector space or an interval of $\R$, $C^1(X,Y)$ denotes the space of the continuously Fr\'echet differentiable mappings from $X$ into $Y$. When $X$ and $Y$ are real normed vector spaces, ${\mathcal L}(X,Y)$ denotes the space of the bounded linear mappings from $X$ into $Y$, and $Isom(X,X)$ denotes the space of the topological isomorphisms from $X$ onto $X$. When $X$ is a metric space, $x \in X$ and $r \in \R_{+*} := (0, + \infty)$, the closed ball (respectively open ball) centered at $x$ with a radius equal to $r$ is denoted by $\overline{B}(x,r)$ (respectively $B(x,r)$).
%%%%%%%%%%%%%%%%%%%%%%%%%%%%%%%%%%%%%%%%%%%%%%%%%%%%%%%%%%%%%%%%%%%%%%%%%%%%%%%%%%%%%%
\subsection{Piecewise continuous functions.}
Let $Y$ be a metric space. A function $u : [0,T] \rightarrow Y$ is called piecewise continuous when $u \in C^0([0,T], Y)$ or when there exists a subdivision $0= \tau_0 < \tau_1 < ...< \tau_k < \tau_{k+1} = T$ such that
\begin{itemize}
\item For all $i \in \{0,...,k \}$, $u$ is continuous on $(\tau_i, \tau_{i+1})$.
\item For all $i \in \{0,...,k \}$, the right-hand limit $u(\tau_i +)$ exists in $Y$.
\item For all $i \in \{1,...,k+1 \}$, the left-hand limit $u(\tau_i -)$ exists in $Y$.
\end{itemize}
In other words, such a function is a regulated function (cf. \cite{Bo}, chapter 2 ) which possesses at most a finite  number of discontinuity points. Their space is denoted by $PC^0([0,T], Y)$. $PC^0([0,T], Y, (\tau_i)_{0 \leq i \leq k+1})$ denotes the space of the $u \in PC^0([0,T], Y)$ such that the set of the discontinuity points of $u$ is included in $\{ \tau_i : i \in \{0,...,k+1 \} \}$. When $A$ is a subset of $Y$, $PC^0([0,T], A)$ (respectively $PC^0([0,T], A, (\tau_i)_{0 \leq i \leq k+1})$) denotes the space of the $u \in PC^0([0,T], Y)$ (respectively $PC^0([0,T], A, (\tau_i)_{0 \leq i \leq k+1})$) such that the closure $\overline{u([0,T])} \subset A$.
\begin{definition}\label{def211} A function $u \in PC^0([0,T], A)$ is called a normalized piecewise continuous function when moreover $u$ is right continuous on $[0,T)$ and when $u(T -) = u(T)$.
\end{definition}
The space of such functions is denoted by $NPC^0([0,T],A)$. When $(\tau_i)_{0 \leq i \leq k+1}$ is a subdivision of $[0,T]$, we set
$$NPC^0([0,T],A, (\tau_i)_{0 \leq i \leq k+1}) :=  NPC^0([0,T],A) \cap PC^0([0,T], A, (\tau_i)_{0 \leq i \leq k+1}).$$
%%%%%%%%%%%%%%%%%%%%%%%%%%%%%%%%%%%%%%%%%%%%%%%%%%%%%%%%%%%%%%%%%%%%%%%%%%%%%%%%%%%%%%%%%%
\subsection{Piecewise continuously differentiable functions.}
When $E$ is a real Banach space, a function $x : [0,T] \rightarrow E$ is called piecewise continuously differentiable when $x \in C^0([0,T], E)$ and when $x \in C^1([0,T], E)$ or when there exists a subdivision $(\tau_i)_{0 \leq i \leq k+1}$ of $[0,T]$ such that the following conditions are fulfilled.
\begin {itemize}
\item For all $i \in \{0,...,k \}$, $x$ is $C^1$ on $(\tau_i, \tau_{i+1})$
\item For all $i \in \{0,...,k \}$, $x'(\tau_i +)$ exists in $E$
\item For all $i \in \{1,...,k+1 \}$, $x'(\tau_i -)$ exists in $E$.
\end{itemize}
The $\tau_i$ are the corners of the function $x$.
We denote by $PC^1([0,T], E)$ the space of such functions; this space is denoted by $KC^1([0,T],E )$ in \cite{ATF} (p. 66, Section 1.4). When $\Omega$ is an open subset of $E$, $PC^1([0,T], \Omega)$ is the set of the $x \in PC^1([0,T], E)$ such that $x([0,T]) \subset \Omega$. When  $(\tau_i)_{0 \leq i \leq k+1}$ is a subdivision of $[0,T]$, we denote by $PC^1([0,T], E, (\tau_i)_{0 \leq i \leq k+1})$ the set of the $x \in PC^1([0,T], E)$ such that the set of the corners of $x$ is included in $\{ \tau_i : i \in \{0, ..., k+1 \} \}$.\\
When $x \in PC^1([0,T], E, (\tau_i)_{0 \leq i \leq k+1})$, we define the function $\underline{d}x : [0,T] \rightarrow E$ by setting
\begin{equation}\label{eq21}
\underline{d}x(t) :=
\left\{
\begin{array}{ccl}
x'(t) & {\rm if} & t \in [0,T] \setminus \{ \tau_i : i \in \{0, ..., k+1 \}\}\\
x'(\tau_i +) & {\rm if} & t = \tau_i, i \in \{0,...,k \}\\
x'(T-) & {\rm if} & t = T.
\end{array}
\right.
\end{equation}
Note that $\underline{d}x \in NPC^0([0,T], E, (\tau_i)_{0 \leq i \leq k+1})$.
% 
%%%%%%%%%%%%%%%%%%%%%%%%%%%%%%%%%%%%%%%%%%%%%%%%%%%%%%%%%%%%%%%%%%%%%%%%%%%%%%%%
\subsection{Rewording of the problems}
We consider the following problem.
\[
({\mathcal B'})
\left\{
\begin{array}{cl}
{\rm Maximize} & J(x,u) := \int_0^T f^0(t,x(t),u(t))dt + g^0(x(T)) \\
{\rm subject \;  to} & x \in PC^1([0,T], \Omega), u \in NPC^0([0,T], U)\\
\null & \underline{d}x(t) = f(t,x(t), u(t)), \; x(0) = \xi_0\\
\null & \forall \alpha = 1,...,m, \; \; g^{\alpha}(x(T)) \geq 0\\
\null & \forall \beta = 1,..., q, \; \; h^{\beta}(x(T)) = 0.
\end{array}
\right.
\]
We denote by (${\mathcal M}'$) the special case of (${\mathcal B}'$) where $f^0 = 0$.
\vskip2mm
\noindent
We denote by $Adm({\mathcal B})$ (respectively $Adm({\mathcal B}')$) the set of the admissible processes of $({\mathcal B})$ (respectively $({\mathcal B}')$). When $(x,u) \in Adm({\mathcal B})$, and when the discontinuity points of $u$ are in the values of the subdivision $(\tau_i)_{0 \leq i \leq k+1}$ of $[0,T]$, we introduce the fonction 
\begin{equation}\label{eq22}
\overline{u}(t) :=
\left\{
\begin{array}{ccl}
u(t) & {\rm if} & t \in (\tau_i, \tau_{i+1}), i \in \{0,...,k \}\\\
u(\tau_i +) & {\rm if} & t = \tau_i, i \in \{0, ...,k \} \\
u(T -) & {\rm if} & t = T;
\end{array}
\right.
\end{equation}
we have $\overline{u} \in NPC^0([0,T], U)$.\\
Note that $f(t, x(t), u(t))$ and $f(t,x(t), \overline{u}(t))$ can to be diffferent only when $t \in \{ \tau_i : 0 \leq i \leq k +1\}$ and so we have $\underline{d}x(t) = f(t, x(t), \overline{u}(t))$ for all $t \in [0,T]$. Also note that 
$f^0(t, x(t), u(t))$ and $f^0(t,x(t), \overline{u}(t))$ can to be diffferent only when $t \in \{ \tau_i : 0 \leq i \leq k+1 \}$ and so we have $\int_0^T f^0(t,x(t), u(t))dt = \int_0^T f^0(t,x(t), \overline{u}(t)) dt$. Consequently we obtain $J(Adm({\mathcal B})) = J(Adm({\mathcal B}'))$. When $(x_0,u_0)$ is a solution of $({\mathcal B}')$ then it is also a solution of $({\mathcal B})$. Conversely, when $(x_0,u_0)$ is a solution of $({\mathcal B})$, building $\overline{u_0}$ by using (\ref{eq22}) where $u$ is $u^0$, we obtain that $(x_0, \overline{u_0})$ is a solution of $({\mathcal B}')$. It is why we can say that the problems $({\mathcal B})$ and $({\mathcal B}')$ are equivalent problems. A similar reasoning is valid to show that the problems $(\mathcal{M})$ and $(\mathcal{M}')$ are equivalent.
 %
%%%%%%%%%%%%%%%%%%%%%%%%%%%%%%%%%%%%%%%%%%%%%%%%%%%%%%%%%%%%%%%%%%%%%%%%%%%%%%%%%%%%%%%%%%%%
\section{The needlelike variations}
%%%%%%%%%%%%%%%%%%%%%%%%%%%%%%%%%%%%%%%%%%%%%%%%%%%%%%%%%%%%%%%%%%%%%%%%%%%%%%%%%%%%%%%%%%%%
\subsection{Two results of the metric spaces theory}
The first result is a generalization of the theorem of Heine on the uniform continuity of a continuous mapping on a compact metric space; it is useful to avoid an assumption of local compactness, and specially, in normed vector spaces, to avoid an assumption of finiteness of the dimension.
\begin{theorem}\label{th31} (\cite{Sc} p. 355, note (**))
Let $X$ and $Y$ be two metric spaces, $\phi \in C^0(X,Y)$, and $K \subset X$ be a compact. Then we have 
$$\forall \epsilon > 0, \exists \delta_{\epsilon} > 0, \forall x \in X, \forall z \in K, d(x,z) \leq \delta_{\epsilon} \Longrightarrow d(\phi(x), \phi(z)) \leq \epsilon.$$
\end{theorem}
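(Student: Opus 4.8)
The plan is to argue by contradiction, exploiting the fact that in a metric space compactness coincides with sequential compactness, so that the noncompact ambient space $X$ causes no difficulty: the distance constraint will drag the free point back onto the compact set in the limit.

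Concretely, I would first negate the conclusion. Suppose there is an $\epsilon > 0$ such that for every $\delta > 0$ one can find $x \in X$ and $z \in K$ with $d(x,z) \leq \delta$ yet $d(\phi(x),\phi(z)) > \epsilon$. Specializing to $\delta = 1/n$ for each $n \in \N$, I obtain two sequences $(x_n)_{n \in \N}$ in $X$ and $(z_n)_{n \in \N}$ in $K$ satisfying
\[
d(x_n, z_n) \leq \tfrac{1}{n} \quad \text{and} \quad d(\phi(x_n), \phi(z_n)) > \epsilon .
\]
Since $K$ is a compact subset of a metric space, it is sequentially compact, so after passing to a subsequence I may assume $z_n \to z_*$ for some $z_* \in K$. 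From $d(x_n, z_*) \leq d(x_n, z_n) + d(z_n, z_*) \leq \tfrac{1}{n} + d(z_n, z_*) \to 0$, the triangle inequality forces $x_n \to z_*$ as well.

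The key step is then to invoke the continuity of $\phi$ \emph{only at the single point} $z_*$ (this is where the argument differs from the classical Heine theorem, which would need uniform behaviour everywhere). Continuity at $z_*$ gives $\phi(x_n) \to \phi(z_*)$ and $\phi(z_n) \to \phi(z_*)$, hence by continuity of the metric $d$ on $Y \times Y$,
\[
d(\phi(x_n), \phi(z_n)) \longrightarrow d(\phi(z_*), \phi(z_*)) = 0 ,
\]
which contradicts $d(\phi(x_n), \phi(z_n)) > \epsilon$ for every $n$. This contradiction establishes the claimed uniform estimate.

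I do not expect a genuine obstacle here: the only conceptual point requiring care is that $x$ ranges over all of $X$ rather than over $K$, but the bound $d(x_n,z_n)\le 1/n$ precisely ensures that both sequences converge to the \emph{same} limit $z_*\in K$, so a single application of continuity at $z_*$ suffices. (A covering-based proof is equally available: for each $z \in K$ choose $r_z>0$ with $d(\phi(y),\phi(z))\le \epsilon/2$ whenever $d(y,z)\le r_z$, extract a finite subcover of $K$ by the balls $B(z,r_z/2)$, and take $\delta$ to be half the least of the corresponding radii; the triangle inequality then places any admissible pair $x,z$ inside a common ball $\overline{B}(z_i,r_{z_i})$, yielding $d(\phi(x),\phi(z))\le\epsilon$. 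I would nonetheless favour the sequential version for its brevity.)
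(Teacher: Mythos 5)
Your proof is correct. Note first that the paper itself contains no proof of this statement: it is quoted as a known result from Schwartz's \emph{Cours d'analyse} (the cited note (**), p.~355), so there is no internal argument to compare yours against. Your sequential argument is sound and complete: the negation yields sequences $(x_n)$ in $X$ and $(z_n)$ in $K$ with $d(x_n,z_n)\leq 1/n$ and $d(\phi(x_n),\phi(z_n))>\epsilon$; sequential compactness of the compact metric space $K$ gives a subsequence $z_{n_k}\to z_*\in K$; the bound $d(x_{n_k},z_{n_k})\leq 1/n_k$ then forces $x_{n_k}\to z_*$ as well, and continuity of $\phi$ at the single point $z_*$ together with the triangle inequality makes $d(\phi(x_{n_k}),\phi(z_{n_k}))\to 0$, contradicting the lower bound $\epsilon$. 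You also correctly isolate the feature that distinguishes this statement from the classical Heine theorem (and explains why no compactness or finite-dimensionality of $X$ is needed): the free point $x$ ranges over all of $X$, but the constraint $d(x,z)\leq\delta$ drags it onto $K$ in the limit, so continuity at points of $K$ suffices. The covering variant you sketch in parentheses is equally valid --- with the finite subcover by balls $B(z_i,r_{z_i}/2)$ and $\delta=\tfrac12\min_i r_{z_i}$, any admissible pair $(x,z)$ lands in a common ball $\overline{B}(z_i,r_{z_i})$, giving $d(\phi(x),\phi(z))\leq\epsilon$ --- and is presumably closer to the argument in the cited reference, since it avoids the axiom-of-choice flavored extraction of sequences; but either route fully establishes the theorem.
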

The following result is a theorem of fixed points in presence of parameters.
\begin{theorem}\label{th32}(\cite{Sc} p. 103, Theorem 46-bis )
Let $X$ be a complete metric space, $\Lambda$ be a metric space, and $\phi : X \times \Lambda \rightarrow X$ be a mapping. We assume that the following conditions are fulfilled.
\begin{itemize}
\item[(a)] $\forall x \in X, \phi(x, \cdot) \in C^0(\Lambda, X).$
\item[(b)] $\exists k \in [0,1), \forall \lambda \in \Lambda, \forall x, z \in X, d(\phi(x, \lambda), \phi(z, \lambda)) \leq k d(x,z).$
\end{itemize}
Then we have
\begin{itemize}
\item[(i)] $\forall \lambda \in \Lambda, \exists ! x_{\lambda} \in X, \phi(x_{\lambda}, \lambda) = x_{\lambda}.$
\item[(ii)] $[ \lambda \mapsto x_{\lambda}] \in C^0(\Lambda, X)$.
\end{itemize}
\end{theorem}
%%%%%%%%%%%%%%%%%%%%%%%%%%%%%%%%%%%%%%%%%%%%%%%%%%%%%%%
\subsection{Definitions of the needlelike variations}
We follow the definition of Michel of the needlelike variations which is given in \cite{Mi2}; Michel himself refers to \cite{PBGM} for this approach. Let $(x_0,u_0)$ be a solution of $({\mathcal M}')$.
 When $N \in \N_* := \N \setminus \{0 \}$, we consider $S := ((t_i,v_i))_{1 \leq i \leq N}$ where $t_i \in [0,T]$ satisfying $0 < t_1 \leq t_2 \leq ... \leq t_N < T$, and where $v_i \in U$. We denote by $\S$ the set of such $S$.
 \vskip1mm
 \noindent 
When $S \in \S$ and $a = (a_1, ..., a_N) \in \R^N_+$, we define the following objects
$$J(i) = J(i, S) := \{ j \in \{ 1,...,i-1 \} : t_j = t_i \}$$
$$b_i(a) = b_i(a,S) := 
\left\{
\begin{array}{ccl}
0 & {\rm if} & J(i) = \emptyset\\
\sum_{j \in J(i)} a_j & {\rm if} & J(i) \neq \emptyset.
\end{array}
\right.
$$
$$I_i(a) = I_i(a,S) := [ t_i + b_i(a), t_i+  b_i(a) + a_i ),$$
\begin{equation}\label{eq31}
u_a(t) = u_a(t,S) := 
\left\{
\begin{array}{ccl}
v_i & {\rm if} & t \in I_i(a), 1 \leq i \leq N\\
u_0(t) & {\rm if} & t \in [0,T] \setminus \cup_{1 \leq i \leq N} I_i(a).
\end{array}
\right.
\end{equation}
When $a$ is small enough, we have $I_i(a) \subset [0,T]$ and $I_i(a) \cap I_j(a) = \emptyset$ when $i \neq j$. \\
We will prove the existence of a solution, denoted by $x_a$ (which depends on $S$ and $a$) of the following Cauchy problem on $[0,T]$:
\begin{equation}\label{eq32}
\underline{d}x_a(t) = f(t,x_a(t), u_a(t)),\; \;  x_a(0) = \xi_0.
\end{equation}
In the sequel of this section, we arbitrarily fix a $S = (t_i, v_i)_{1 \leq i \leq N}$ in $\S$.
%%%%%%%%%%%%%%%%%%%%%%%%%%%%%%%%%%%%%%%%%%%%%%%%%%%%%%%%%%%%%%%%%%%%%%%%%%%%%%%%%%%%%%%%%%%%%%%
\subsection{Properties of continuity} In this subsection, we establish the existence of $x_a$ on $[0,T]$ all over and we establish the continuity of the mapping $[a \mapsto x_a]$. To do that we introduce an appropriate function space and a nonlinear operator from which $x_a$ appears as a fixed point of this operator. The continuity of $[a \mapsto x_a]$ will be a consequence of the fixed point theorem with parameters.
\begin{lemma}\label{lem33} (\cite{Mi2}, Proposition 2) There exists $k \in \R_{+*} := \R_+ \setminus \{0 \}$, there exists $\rho \in \R_{+*}$ such that, for all $a \in \R^N_+$ satisfying $\Vert a \Vert \leq \rho$, we have 
$$\int_0^T \Vert f(t, x_0(t), u_a(t)) - f(t, x_0(t), u_0(t)) \Vert dt \leq k \Vert a \Vert.$$
\end{lemma}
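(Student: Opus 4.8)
The plan is to exploit the fact that $u_a$ and $u_0$ differ only on the union of the needle intervals $\bigcup_{i=1}^N I_i(a)$, so that the integrand vanishes outside this union and the estimate reduces to controlling the contribution of each needle, whose length is exactly $a_i$.

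First I would fix $\rho \in \R_{+*}$ so small that, for every $a \in \R^N_+$ with $\Vert a \Vert \le \rho$, the intervals $I_i(a)$ are pairwise disjoint and contained in $[0,T]$; this is possible by the construction of the $b_i(a)$ (which stack the needles sharing a common base point so that they abut without overlapping) together with $0 < t_1$ and $t_N < T$, and it is exactly the observation already recorded just after (\ref{eq31}). For such $a$, since $u_a(t) = u_0(t)$ whenever $t \notin \bigcup_i I_i(a)$ while $u_a(t) = v_i$ on $I_i(a)$, the integrand is zero off the union and equals $\Vert f(t,x_0(t),v_i) - f(t,x_0(t),u_0(t)) \Vert$ on $I_i(a)$; hence
\[
\int_0^T \Vert f(t, x_0(t), u_a(t)) - f(t, x_0(t), u_0(t)) \Vert\, dt = \sum_{i=1}^N \int_{I_i(a)} \Vert f(t,x_0(t),v_i) - f(t,x_0(t),u_0(t)) \Vert\, dt.
\]

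The next step is a uniform bound on the integrand. The set $x_0([0,T])$ is compact because $x_0 \in C^0([0,T], \Omega)$, and $\overline{u_0([0,T])}$ is compact because $u_0$ is piecewise continuous with only finitely many discontinuities (each continuous piece extends to a compact subinterval, giving a compact image, and a finite union of compact sets is compact). Thus $K := \overline{u_0([0,T])} \cup \{v_1, \dots, v_N\}$ is a compact subset of $U$, and $[0,T] \times x_0([0,T]) \times K$ is a compact subset of $[0,T] \times \Omega \times U$ on which $f$ is continuous, hence bounded by some $M \in \R_{+*}$ by (A4). By the triangle inequality the integrand on each $I_i(a)$ is $\le 2M$, and since the length of $I_i(a)$ equals $a_i$ we obtain $\int_{I_i(a)} \Vert \cdots \Vert\, dt \le 2M a_i$.

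Summing, and using $\sum_{i=1}^N a_i \le C\,\Vert a \Vert$ for a constant $C = C(N)$ (equivalence of norms on $\R^N$, recalling $a_i \ge 0$), the integral is bounded by $2MC\,\Vert a \Vert$, so the claim holds with $k := 2MC$. The argument is essentially a boundedness-times-length estimate; the only points requiring care are the compactness of $\overline{u_0([0,T])}$, where the finiteness of the discontinuity set of $u_0$ is used, and the choice of $\rho$ guaranteeing disjointness and inclusion of the needle intervals. I expect no serious obstacle here: in particular, unlike later steps, no uniform-continuity input (Theorem \ref{th31}) is needed, since the needle values $v_i$ are fixed and enter only through the fixed compact set $K$.
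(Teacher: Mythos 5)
Your proposal is correct, but there is nothing in the paper to compare it against: the paper does not prove Lemma \ref{lem33} at all, it simply imports the statement from Michel (\cite{Mi2}, Proposition 2). So your argument supplies a proof where the paper leaves only a citation. The argument itself is sound: the choice of $\rho$ ensuring disjointness and containment of the needle intervals is legitimate (needles sharing a base point stack by construction of the $b_i(a)$, needles with distinct base points separate once $\sum_i a_i$ is below the minimal gap and below $T-t_N$, and this threshold is uniform in $a$); the reduction of the integral to $\sum_i \int_{I_i(a)}\Vert f(t,x_0(t),v_i)-f(t,x_0(t),u_0(t))\Vert\,dt$ is exact; and the uniform bound via compactness of $[0,T]\times x_0([0,T])\times K$ and continuity of $f$ (assumption (A4)) gives the boundedness-times-length estimate, with the norm-equivalence constant absorbing $\sum_i a_i \leq C\Vert a\Vert$. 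Two remarks tying this to the paper: your compact set $K$ is essentially the set $M$ that the paper constructs in (\ref{eq34}) a few lines later (there it serves the stronger Lipschitz estimate of Lemma \ref{lem34}); and your closing observation is accurate --- the generalized Heine theorem (Theorem \ref{th31}) is genuinely not needed here, since this lemma only requires boundedness of $f$ on a compact set, not uniform continuity of $D_2 f$ near it. One cosmetic point: since $u_0$ solves $(\mathcal{M}')$ it is normalized, so $u_0([0,T])$ is already contained in the union of the compact images $u_0^i([\tau_i,\tau_{i+1}])$ and taking the closure is not even necessary, though it is harmless.
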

\noindent
We consider the subdivision $(\tau_i)_{0 \leq i \leq k+1}$ of $[0,T]$ where the $\tau_i$ are the discontinuity points of $u_0$. For all $i \in \{ 0,...k \}$ we consider the function $u_0^i : [ \tau_i, \tau_{i + 1}] \rightarrow U$ defined by
\begin{equation}\label{eq33}
u_0^i(t) :=
\left\{
\begin{array}{ccl}
u_0(t) & {\rm if} & t \in [ \tau_i, \tau_{i+1} ) \\
u_0(\tau_{i+1} -) & {\rm if} & t = \tau_{i+1}.
\end{array} 
\right.
\end{equation}
Hence $u_0^i \in C^0([ \tau_i, \tau_{i+1} ], U)$, and consequently $u_0^i([ \tau_i, \tau_{i+1}])$ is compact. We set
\begin{equation}\label{eq34}
M := (\bigcup_{0 \leq i \leq k} u_0^i([ \tau_i, \tau_{i+1}]) \cup \{ v_i : 1 \leq i \leq N \}).
\end{equation}
$M$ is compact as a finite union of compacts. We set
\begin{equation}\label{eq35}
\Gamma := \{ (t, x_0(t)) : t \in [0,T] \}.
\end{equation}
Since $x_0 \in C^0([0,T], \Omega)$, $\Gamma$ is compact.
\begin{lemma}\label{lem34} There exist $L \in \R_{+*}$ and $r \in \R_{+*}$  such that, $\forall t \in [0,T]$,\\
 $\forall \xi, \xi_1 \in \overline{B}(x_0(t), r)$, $\forall \zeta \in M$, we have 
$\Vert f(t, \xi, \zeta) - f(t, \xi_1, \zeta) \Vert \leq L \Vert \xi - \xi_1 \Vert.$
\end{lemma}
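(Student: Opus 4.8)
The plan is to deduce the Lipschitz estimate from a uniform bound on the partial differential $D_2 f$ over a thin tube around the trajectory, combined with the mean value inequality for Banach-space valued maps. The natural compact set to work with is
$$K := \{ (t, x_0(t), \zeta) : t \in [0,T],\ \zeta \in M \},$$
which is the image of the compact set $[0,T] \times M$ under the continuous map $(t, \zeta) \mapsto (t, x_0(t), \zeta)$, and hence is a compact subset of $[0,T] \times \Omega \times U$ (recall $M$ is compact by \eqref{eq34} and $\Gamma$ by \eqref{eq35}).

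First I would secure that a tube around $\Gamma$ stays inside the domain: since $x_0([0,T])$ is compact (it is the projection onto $E$ of the compact set $\Gamma$) and $\Omega$ is open, the distance from $x_0([0,T])$ to $E \setminus \Omega$ is a positive number $2r_0$ (the case $\Omega = E$ being trivial), so that $\overline{B}(x_0(t), r_0) \subset \Omega$ for every $t \in [0,T]$. Next, using (A4), $D_2 f$ is continuous, hence bounded on the compact $K$; I set $L_0 := \max_{K} \Vert D_2 f \Vert < + \infty$.

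To transfer this bound to the full tube --- where the closed balls $\overline{B}(x_0(t), r)$ are \emph{not} compact because $E$ need not be finite dimensional --- I would invoke Theorem \ref{th31} applied to the continuous map $D_2 f$ and the compact $K$, with $\epsilon = 1$. This yields $\delta_1 > 0$ such that every point of $[0,T] \times \Omega \times U$ lying within $\delta_1$ of $K$ has $\Vert D_2 f \Vert$ within $1$ of a value attained on $K$. Choosing $r := \min(r_0, \delta_1)$ and using the product metric, for $t \in [0,T]$, $\xi \in \overline{B}(x_0(t), r)$ and $\zeta \in M$ the point $(t, \xi, \zeta)$ is at distance $\Vert \xi - x_0(t) \Vert \leq r \leq \delta_1$ from $(t, x_0(t), \zeta) \in K$; therefore $\Vert D_2 f(t, \xi, \zeta) \Vert \leq L_0 + 1 =: L$, uniformly in $t$ and $\zeta$.

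Finally, for fixed $t$ and $\zeta \in M$, the map $\xi \mapsto f(t, \xi, \zeta)$ is Fr\'echet differentiable on the convex set $\overline{B}(x_0(t), r)$ with differential bounded in norm by $L$, so the mean value inequality along the segment $[\xi_1, \xi]$ gives $\Vert f(t, \xi, \zeta) - f(t, \xi_1, \zeta) \Vert \leq L \Vert \xi - \xi_1 \Vert$ for all $\xi, \xi_1 \in \overline{B}(x_0(t), r)$, which is the claim. The main obstacle is precisely the uniformity in $t$ and $\zeta$ over a non-compact tube in an infinite-dimensional Banach space; this is exactly what the Heine-type Theorem \ref{th31} is designed to overcome, and it is the reason a naive compactness argument applied directly to the tube fails.
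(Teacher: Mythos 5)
Your proof is correct and follows essentially the same route as the paper's: the compact set you call $K$ is exactly the paper's $\Gamma \times M$, and both arguments use Theorem \ref{th31} to get a uniform bound on $\Vert D_2 f \Vert$ over a tube $\bigcup_{t}\overline{B}(x_0(t),r)\times M$ contained in $\Omega \times U$, then conclude by the mean value inequality on each convex ball. The only differences are cosmetic (you fix $\epsilon = 1$ and split off $L_0 = \max_K \Vert D_2 f\Vert$, while the paper works with a generic $\epsilon$ and ensures $\delta_\epsilon < \gamma$ rather than taking a minimum of two radii).
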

\begin{proof}
Since $\Omega$ is open in $E$, since $x_0([0,T])$ is compact and included in $\Omega$, there exists $\gamma > 0$ such that $\{ \xi \in E : d( \xi, x_0([0,T])) < \gamma \} \subset \Omega$, where $d(\xi, x_0([0,T]) := \inf_{0 \leq t \leq T} \Vert  \xi - x_0(t) \Vert$.
We set $K := \Gamma \times M$; $K$ is compact as a product of compacts. Using Theorem \ref{th31} and (A4), we have
\begin{equation}\label{eq36}
\left.
\begin{array}{r}
\forall \epsilon > 0, \exists \delta_{\epsilon} \in (0, \gamma), \forall (t,\xi, \zeta) \in K, \forall (t_1, \xi_1, \zeta_1) \in [0,T] \times \Omega \times U,\\
d((t,\xi, \zeta), (t_1, \xi_1, \zeta_1)) = \vert t - t_1 \vert + \Vert \xi - \xi_1 \Vert + d(\zeta, \zeta_1) \leq \delta_{\epsilon} \Longrightarrow \\
\Vert D_2 f(t,\xi, \zeta) - D_2 f(t_1, \xi_1, \zeta_1) \Vert \leq \epsilon.
\end{array} 
\right\}
\end{equation}
Arbitrarily fix an $\epsilon > 0$. Let $t \in [0,T]$, $\zeta \in M$ and $\xi \in \overline{B}(x_0(t), \delta_{\epsilon})$. From (\ref{eq36}) we obtain
$$
\begin{array}{rcl}
\Vert D_2f(t,\xi,\zeta) \Vert&  \leq&  \Vert D_2f(t,x_0(t), \zeta) \Vert + \epsilon \\
\null & \leq & \sup_{(t_1, \xi_1, \zeta_1) \in K} \Vert D_2 f(t_1, \xi_1, \zeta_1) \Vert + \epsilon \Longrightarrow
\end{array}
$$
$L := \sup \{ \Vert D_2 f(t, \xi_1, \zeta_1) \Vert : t \in [0,T], \xi_1 \in \overline{B}(x_0(t), \delta_{\epsilon}), \zeta_1 \in M \}$\\\
$\leq \sup_{(t_1, \xi_1, \zeta_1) \in K} \Vert D_2 f(t_1, \xi_1, \zeta_1) \Vert + \epsilon < + \infty$.\\
We set $r := \delta_{\epsilon}$. If $t \in [0,T]$, $\xi , \xi_1 \in \overline{B}(x_0(t), r)$ and $\zeta \in M$, using the Mean Value Inequality of the differential calculus theory we obtain \\
$\Vert f(t, \xi, \zeta) - f(t, \xi_1, \zeta) \Vert \leq L \Vert \xi - \xi_1 \Vert$.
\end{proof}
When $\varphi \in C^0([0,T], E)$, we set $\Vert \varphi \Vert_b := \sup_{t \in [0,T]} (e^{- L t} \Vert \varphi (t) \Vert)$. $\Vert \cdot \Vert_b$ is called the norm of Bielecki (\cite{DG} p. 25-27) and $(C^0([0,T], E), \Vert \cdot \Vert_b)$ is a complete normed vector space. We define 
\begin{equation}\label{eq37}
r_1 := r e^{-L \cdot T}, \;\; \mathcal{X} := \overline{B}(x_0, r_1).
\end{equation}
Note that $(\mathcal{X}, \Vert \cdot \Vert_b)$ is a complete metric space. Note that when $x \in \overline{B}(x_0, r_1)$ we have, for all $t \in [0,T]$, $e^{ -L \cdot T} \Vert x(t) - x_0(t) \Vert \leq e^{- L \cdot t} \Vert x(t) - x_0(t) \Vert \leq   e^{- L \cdot T} r$ which implies $\Vert x(t) - x_0(t) \Vert \leq   r < \gamma$, and so $x(t) \in \Omega$. For all $a \in \overline{B}(0, \rho) \cap \R^N_+$, we consider the operator
\begin{equation}\label{eq38}
\Phi_a : \mathcal{X} \rightarrow C^0([0,T], E), \; \Phi_a(x) := [t \mapsto \xi_0 + \int_0^t f(s, x(s), u_a(s)) ds].
\end{equation}
\begin{lemma}\label{lem35} The constants $k$ and $\rho$ come from Lemma \ref{lem33}; the constant $L$ comes from Lemma \ref{lem34} and the constant $r_1$ comes from (\ref{eq37}).\\
 We set $r_2 := \min \{ \rho, e^{-L \cdot T} r_1 k^{-1}\}$.
When $a \in \R^N_+$, if $\Vert a \Vert \leq r_2$ then $\Phi_a(\mathcal{X}) \subset \mathcal{X}$.
\end{lemma}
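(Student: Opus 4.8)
The plan is to show that whenever $x \in \mathcal{X}$ and $\Vert a \Vert \le r_2$ one has $\Vert \Phi_a(x) - x_0 \Vert_b \le r_1$, which is precisely $\Phi_a(x) \in \overline{B}(x_0, r_1) = \mathcal{X}$. First I would record the integral form of the reference trajectory: since $\underline{d}x_0(t) = f(t, x_0(t), u_0(t))$ with $x_0(0) = \xi_0$ and $x_0$ is continuous, we have $x_0(t) = \xi_0 + \int_0^t f(s, x_0(s), u_0(s))\, ds$. Subtracting this from the defining formula \eqref{eq38} of $\Phi_a$ gives, for every $t \in [0,T]$, the identity $\Phi_a(x)(t) - x_0(t) = \int_0^t [f(s, x(s), u_a(s)) - f(s, x_0(s), u_0(s))]\, ds$.

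The key step is to split this integrand by inserting $\pm f(s, x_0(s), u_a(s))$ and to control the two resulting pieces by the two preceding lemmas. For the state-variation piece I would invoke Lemma \ref{lem34}: since $x \in \mathcal{X}$ forces $\Vert x(s) - x_0(s) \Vert \le r$ (as observed just after \eqref{eq37}), so that $x(s) \in \overline{B}(x_0(s), r)$, and since every value of $u_a$ lies in the compact set $M$ of \eqref{eq34}, the Lipschitz bound $\Vert f(s, x(s), u_a(s)) - f(s, x_0(s), u_a(s)) \Vert \le L \Vert x(s) - x_0(s) \Vert$ holds pointwise in $s$. For the control-variation piece, the hypothesis $\Vert a \Vert \le r_2 \le \rho$ lets me apply Lemma \ref{lem33} to obtain $\int_0^t \Vert f(s, x_0(s), u_a(s)) - f(s, x_0(s), u_0(s)) \Vert\, ds \le k \Vert a \Vert$, where I use that the integrand is nonnegative so $\int_0^t \le \int_0^T$.

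The final step converts these into a bound on the Bielecki norm, which is where the weight $e^{-Ls}$ does the real work. Multiplying by $e^{-Lt}$ and writing $\Vert x(s) - x_0(s) \Vert = e^{Ls}\, e^{-Ls}\Vert x(s) - x_0(s) \Vert \le e^{Ls}\Vert x - x_0 \Vert_b \le e^{Ls} r_1$, the state piece contributes $e^{-Lt}\int_0^t L e^{Ls} r_1\, ds = r_1(1 - e^{-Lt})$, while the control piece contributes at most $e^{-Lt} k \Vert a \Vert \le k \Vert a \Vert$. Taking the supremum over $t \in [0,T]$, the first contribution is maximal at $t = T$, whence $\Vert \Phi_a(x) - x_0 \Vert_b \le r_1(1 - e^{-LT}) + k \Vert a \Vert$. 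Since $\Vert a \Vert \le r_2 \le e^{-LT} r_1 k^{-1}$ yields $k \Vert a \Vert \le r_1 e^{-LT}$, the right-hand side is at most $r_1(1 - e^{-LT}) + r_1 e^{-LT} = r_1$, which is exactly the assertion $\Phi_a(x) \in \mathcal{X}$.

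I expect the main difficulty to be bookkeeping rather than depth: one must check that the hypotheses of Lemmas \ref{lem33} and \ref{lem34} are genuinely satisfied for the perturbed control $u_a$ — in particular that $u_a$ is $M$-valued and that $\Vert a \Vert \le \rho$ — and one must exploit the $e^{-Ls}$ weighting of the Bielecki norm precisely so that the accumulated Lipschitz error stays below $r_1$ instead of growing with $T$. The two terms in $r_2 = \min\{\rho, e^{-LT} r_1 k^{-1}\}$ then have a transparent meaning: $\rho$ secures the applicability of Lemma \ref{lem33}, while $e^{-LT} r_1 k^{-1}$ is exactly the threshold that absorbs the control-variation error into the residual slack $r_1 e^{-LT}$ left over by the state-variation error.
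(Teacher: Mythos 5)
Your proof is correct and takes essentially the same route as the paper's: both insert the comparison term $f(s,x_0(s),u_a(s))$ to split the error into a control-variation piece bounded via Lemma \ref{lem33} (using $\Vert a \Vert \le \rho$) and a state-variation piece bounded via the Lipschitz estimate of Lemma \ref{lem34}, with the Bielecki weight converting the Lipschitz accumulation into the factor $1-e^{-L\cdot T}$ so that the two contributions sum to exactly $r_1$. The only cosmetic difference is that the paper expresses the split as the triangle inequality $\Vert \Phi_a(x) - x_0 \Vert_b \le \Vert \Phi_a(x) - \Phi_a(x_0) \Vert_b + \Vert \Phi_a(x_0) - x_0 \Vert_b$, which is the same decomposition performed at the level of norms rather than under the integral sign.
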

\begin{proof}
Note that, for all $t \in [0,T]$, we have $x_0(t) = \xi_0 + \int_0^t f(s, x_0(s), u_0(s)) ds$; consequently we have
$\Vert \Phi_a(x_0)(t) - x_0(t) \Vert = \Vert \int_0^t (f(s,x_0(s), u_a(s)) - f(s, x_0(s), u_0(s))) ds \Vert$ \\
$\leq \int_0^t \Vert f(s,x_0(s), u_a(s)) - f(s,x_0(s), u_0(s)) \Vert ds \;\;\;\; \Longrightarrow $  \\
$e^{-L \cdot t} \Vert \Phi_a(x_0)(t) - x_0(t) \Vert  \leq e^{- L \cdot t} \int_0^t \Vert f(s,x_0(s), u_a(s)) - f(s,x_0(s), u_0(s)) \Vert ds$ \\
$ \leq e^{- L \cdot t} \int_0^T \Vert f(s,x_0(s), u_a(s)) - f(s,x_0(s), u_0(s)) \Vert ds \leq e^{- L \cdot t} k \Vert a \Vert $ using Lemma \ref{lem33}. Hence taking the sup on the $t \in [0,T]$, we obtain \\
$\Vert \Phi_a(x_0) - x_0 \Vert_b \leq \sup_{t \in [0,T]}e^{- L \cdot t} k \Vert a \Vert \leq k \Vert a \Vert$, and so we have, for $a \in \R^N_+$, 
\begin{equation}\label{eq39}
\Vert \Phi_a(x_0) - x_0 \Vert_b \leq e^{-L \cdot T} r_1.
\end{equation}
Let $x \in \mathcal{X}$; then for all $t \in [0,T]$, we have $e^{-L \cdot t} \Vert x(t) - x_0(t) \Vert \leq r_1$ which implies $\Vert x(t) - x_0(t) \Vert \leq r$, and we can use Lemma \ref{lem34} to assert that we have 
\begin{equation}\label{eq310}
\forall t \in [0,T], \; \Vert f(t,x_0(t), u_a(t)) - f(t, x(t), u_a(t)) \Vert \leq L \Vert x(t) - x_0(t) \Vert.
\end{equation}
Now for all $t \in [0,T]$ we have\\
$\Vert ( \Phi_a(x) - \Phi_a(x_0))(t) \Vert = \Vert \int_0^t ( f(s,x(s), u_a(s)) - f(s, x_0(s), u_a(s))) ds \Vert$ \\
$ \leq \int_0^t \Vert f(s, x(s), u_a(s)) - f(s, x_0(s), u_a(s)) \Vert ds \;\;\;\; \Longrightarrow $ \\
$e^{-L \cdot t} \Vert ( \Phi_a(x) - \Phi_a(x_0))(t) \Vert \leq e^{-L \cdot t} \int_0^t \Vert f(s, x(s), u_a(s)) - f(s, x_0(s), u_a(s)) \Vert ds$ \\
$ \leq e^{-L \cdot t} \int_0^t (L \Vert x(t) - x_0(t) \Vert) ds$ (after (\ref{eq310})) \\
$= L  e^{-L \cdot t} \int_0^t (e^{L \cdot s} e^{- L \cdot s} \Vert x(s) - x_0(s) \Vert ) ds \leq L  e^{-L \cdot t} \int_0^t (e^{L \cdot s} \Vert x - x_0 \Vert_b) ds$ \\
$ = L e^{-L \cdot t} \frac{e^{L \cdot t} - 1}{L} \Vert x - x_0 \Vert_b = (1 - e^{L \cdot t})\Vert x - x_0 \Vert_b \leq (1 - e^{- L \cdot T} )r_1$.\\
Taking the sup on the $t \in [0,T]$, we have proven
\begin{equation}\label{eq311}
\forall  x \in \mathcal{X}, \;\; \Vert \Phi_a(x) - \Phi_a(x_0) \Vert_b \leq (1 - e^{ - L \cdot T}) r_1.
\end{equation}
Using (\ref{eq39}), we obtain $\Vert \Phi_a(x) - \Phi_a(x_0) \Vert_b \leq $ \\
$\Vert \Phi_a(x) - \Phi_a(x_0) \Vert_b + \Vert \Phi_a(x_0) - x_0 \Vert_b \leq (1 - e^{-L \cdot T}) r_1 + e^{-L \cdot T} r_1 = r_1$, hence $\Phi_a(x) \in \mathcal{X}$.
\end{proof}
\begin{lemma}\label{lem36} The constant $r_2$ comes from Lemma \ref{lem35}.
Let $a \in \R^N_+$. If $\Vert a \Vert \leq r_2$, then, for all $x, z \in \mathcal{X}$, we have 
$\Vert \Phi_a(x) - \Phi_a(z) \Vert_b \leq (1 - e^{-L \cdot T}) \Vert x - z \Vert_b$.
\end{lemma}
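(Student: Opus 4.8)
The plan is to prove the contraction estimate for $\Phi_a$ essentially by rerunning the computation already carried out in the proof of Lemma~\ref{lem35}, but now comparing two arbitrary elements $x,z \in \mathcal{X}$ rather than comparing a single $x$ with the reference point $x_0$. First I would observe that for any $x \in \mathcal{X}$ we have, for all $t \in [0,T]$, the pointwise bound $\Vert x(t) - x_0(t) \Vert \leq r$ (this is the computation recorded right after~(\ref{eq37}) and reused in Lemma~\ref{lem35}), so both $x(t)$ and $z(t)$ lie in $\overline{B}(x_0(t), r)$. That places us exactly in the hypotheses of the Lipschitz estimate of Lemma~\ref{lem34}, provided the common control value $u_a(t)$ lands in the compact set $M$ of~(\ref{eq34}). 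The latter holds by construction of $u_a$ in~(\ref{eq31}): at each $t$, $u_a(t)$ equals either some needle value $v_i$ or the original control $u_0(t)$, and both kinds of values belong to $M$.

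With that in hand, the key step is the chain of inequalities. For each $t \in [0,T]$,
\begin{equation}\label{eq312}
\Vert (\Phi_a(x) - \Phi_a(z))(t) \Vert = \Big\Vert \int_0^t \big( f(s,x(s),u_a(s)) - f(s,z(s),u_a(s)) \big) ds \Big\Vert \leq \int_0^t \Vert f(s,x(s),u_a(s)) - f(s,z(s),u_a(s)) \Vert ds.
\end{equation}
Since $\Vert a \Vert \leq r_2 \leq \rho$ guarantees that $u_a$ is well defined with disjoint needle intervals inside $[0,T]$, and since $x(s), z(s) \in \overline{B}(x_0(s),r)$ with $u_a(s) \in M$, Lemma~\ref{lem34} bounds the integrand by $L \Vert x(s) - z(s) \Vert$. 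I would then multiply through by $e^{-L\cdot t}$ and insert the factor $e^{L\cdot s}e^{-L\cdot s}$ under the integral to bring in the Bielecki norm, exactly as in Lemma~\ref{lem35}:
\begin{equation}\label{eq313}
e^{-L\cdot t} \Vert (\Phi_a(x) - \Phi_a(z))(t) \Vert \leq L e^{-L\cdot t} \int_0^t e^{L\cdot s} \big( e^{-L\cdot s} \Vert x(s) - z(s) \Vert \big) ds \leq L e^{-L\cdot t} \Vert x - z \Vert_b \int_0^t e^{L\cdot s} ds.
\end{equation}
Evaluating $\int_0^t e^{L\cdot s} ds = (e^{L\cdot t} - 1)/L$ collapses the right-hand side to $(1 - e^{-L\cdot t}) \Vert x - z \Vert_b$. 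Taking the supremum over $t \in [0,T]$ and using that $1 - e^{-L\cdot t}$ is increasing, hence maximized at $t = T$, yields $\Vert \Phi_a(x) - \Phi_a(z) \Vert_b \leq (1 - e^{-L\cdot T}) \Vert x - z \Vert_b$, which is the claim.

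I do not expect a genuine obstacle here, since the argument is a near-verbatim specialization of the estimate already proved for $\Phi_a(x) - \Phi_a(x_0)$ in Lemma~\ref{lem35}; the only point requiring a word of care is the justification that Lemma~\ref{lem34} applies, namely that both arguments $x(s)$ and $z(s)$ stay in the ball $\overline{B}(x_0(s),r)$ and that $u_a(s) \in M$. The first is immediate from membership in $\mathcal{X} = \overline{B}(x_0, r_1)$ together with the Bielecki-to-sup comparison $\Vert x(s) - x_0(s)\Vert \leq r$; the second is immediate from the definition of $u_a$ and of $M$. The contraction constant $1 - e^{-L\cdot T}$ is strictly less than $1$ because $L, T > 0$, so this lemma, combined with Lemma~\ref{lem35}, sets up both hypotheses needed to invoke the parametrized fixed point Theorem~\ref{th32} with $X = \mathcal{X}$ and parameter space $\Lambda = \overline{B}(0,r_2) \cap \R^N_+$, thereby producing the solution $x_a$ of~(\ref{eq32}) together with the continuity of $[a \mapsto x_a]$.
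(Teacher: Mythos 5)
Your proof is correct and is essentially identical to the paper's own argument: both reduce the claim to the Lipschitz estimate of Lemma \ref{lem34} (valid because $x(s), z(s) \in \overline{B}(x_0(s),r)$ and $u_a(s) \in M$), then run the Bielecki-norm computation with the factor $e^{L\cdot s}e^{-L\cdot s}$ inserted under the integral and the evaluation $\int_0^t e^{L\cdot s}\,ds = (e^{L\cdot t}-1)/L$. Your added justification that $u_a(s)$ always lies in $M$ is a point the paper leaves implicit, but it changes nothing in the structure of the proof.
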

\begin{proof}
Let $x, z \in \mathcal{X}$. Since, for all $t \in [0,T]$, we have $e^{- L \cdot t} \Vert x(t) - x_0(t) \Vert \leq r_1$ and $e^{- L \cdot t} \Vert z(t) - x_0(t) \Vert \leq r_1$, we obtain $\Vert x(t) - x_0(t) \vert \leq r$ and $\Vert z(t) - x_0(t) \Vert`\leq r$, and using Lemma \ref{lem34}, we have \\
$e^{-L \cdot t} \Vert( \Phi_a(x) - \Phi_a(z))(t) \Vert \leq e^{-L \cdot t}\int_0^t \Vert f(s,x(s),u_a(s)) - f(s,z(s),u_a(s)) \Vert ds$ \\
$\leq e^{-L \cdot t} \int_0^t(L \Vert x(s) - z(s) \Vert) ds 
= L e^{-L \cdot t} \int_0^t (e^{L \cdot s} e^{-L \cdot s} \Vert x(s) - z(s) \Vert) ds$ \\
$ \leq  L e^{-L \cdot t} \int_0^t (e^{L \cdot s} \Vert x - z \Vert_b)ds
\leq L e^{-L \cdot t} \frac{e^{L \cdot t} - 1}{L} \Vert x - z \Vert_b \leq (1 - e^{- L \cdot T}) \Vert x - z \Vert_b$.
\end{proof}
\begin{lemma}\label{lem37}
For all $x \in \mathcal{X}$, the mapping $[a \mapsto \Phi_a(x)]$ is continuous from $\overline{B}(0,r_2) \cap \R^N_+$ into $\mathcal{X}$.
\end{lemma}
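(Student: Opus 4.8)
The plan is to fix $x \in \mathcal{X}$ and to estimate, for $a, a' \in \overline{B}(0,r_2) \cap \R^N_+$, the quantity $\Vert \Phi_{a'}(x) - \Phi_a(x) \Vert_b$, showing it tends to $0$ as $a' \to a$. Since $\Phi_{a'}(x)(t) - \Phi_a(x)(t) = \int_0^t (f(s,x(s),u_{a'}(s)) - f(s,x(s),u_a(s)))\,ds$ and $e^{-L \cdot t} \leq 1$, the Bielecki norm is controlled by $\sup_{t \in [0,T]} \int_0^t \Vert f(s,x(s),u_{a'}(s)) - f(s,x(s),u_a(s)) \Vert\,ds$, which is itself bounded by $\int_0^T \Vert f(s,x(s),u_{a'}(s)) - f(s,x(s),u_a(s)) \Vert\,ds$. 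So everything reduces to showing that this last integral tends to $0$ as $a' \to a$.

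First I would record a uniform bound on $f$ along the relevant tube. For $x \in \mathcal{X}$ we have $\Vert x(s) - x_0(s) \Vert \leq r$ for all $s$ (as noted after (\ref{eq37})), and $u_a(s) \in \{ v_1,\dots,v_N \} \cup u_0([0,T]) \subset M$. Since $K = \Gamma \times M$ is compact and $f$ is continuous, $C_0 := \sup_{(s,\xi,\zeta) \in K} \Vert f(s,\xi,\zeta) \Vert < + \infty$; combining this with the Lipschitz estimate of Lemma \ref{lem34} gives $\Vert f(s,x(s),\zeta) \Vert \leq \Vert f(s,x_0(s),\zeta) \Vert + L \Vert x(s) - x_0(s) \Vert \leq C_0 + Lr =: C$ for every $s \in [0,T]$ and every $\zeta \in M$. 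This step is the one that must avoid any compactness of balls in $E$, and it is exactly here that Lemma \ref{lem34} does the work in place of finite-dimensionality.

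Next, shrinking $r_2$ if necessary so that for $\Vert a \Vert \leq r_2$ the intervals $I_i(a)$ are pairwise disjoint and contained in $[0,T]$ (as noted after (\ref{eq31})), I would use disjointness to write, for such $a$ and for all $s \in [0,T]$,
\[
f(s,x(s),u_a(s)) = f(s,x(s),u_0(s)) + \sum_{i=1}^N \big( f(s,x(s),v_i) - f(s,x(s),u_0(s)) \big)\,\mathbf{1}_{I_i(a)}(s).
\]
Subtracting the analogous identity for $a'$, the $u_0$-terms cancel, and using $\Vert f(s,x(s),\zeta) \Vert \leq C$ for $\zeta \in \{ v_i, u_0(s) \} \subset M$ I obtain
\[
\int_0^T \Vert f(s,x(s),u_{a'}(s)) - f(s,x(s),u_a(s)) \Vert\,ds \leq 2C \sum_{i=1}^N \big| I_i(a') \,\triangle\, I_i(a) \big|,
\]
where $\triangle$ denotes the symmetric difference and $| \cdot |$ the Lebesgue measure.

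Finally I would establish the continuity of $a \mapsto \mathbf{1}_{I_i(a)}$ in $L^1$. The endpoints of $I_i(a) = [\,t_i + b_i(a),\, t_i + b_i(a) + a_i\,)$ are affine functions of $a$ (recall $b_i(a) = \sum_{j \in J(i)} a_j$), hence continuous, and for half-open intervals one has $\big| [\alpha,\beta) \triangle [\alpha',\beta') \big| \leq |\alpha - \alpha'| + |\beta - \beta'|$. Therefore $\big| I_i(a') \triangle I_i(a) \big| \to 0$ as $a' \to a$, and the two displayed estimates yield $\Vert \Phi_{a'}(x) - \Phi_a(x) \Vert_b \leq 2C \sum_{i=1}^N \big| I_i(a') \triangle I_i(a) \big| \to 0$, which is the desired continuity; Lemma \ref{lem35} guarantees that the map indeed takes values in $\mathcal{X}$. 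The only genuinely delicate point is the uniform bound $C$ of the second step, where compactness of balls is unavailable; the remaining symmetric-difference and $L^1$-continuity estimates are routine.
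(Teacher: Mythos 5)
Your proof is correct, but it takes a genuinely different route from the paper's. Both arguments share the same skeleton: reduce $\Vert \Phi_{a'}(x)-\Phi_a(x)\Vert_b$ to the $L^1$-distance $\int_0^T \Vert f(s,x(s),u_{a'}(s))-f(s,x(s),u_a(s))\Vert\,ds$, and exploit the decomposition of $u_a$ through the indicator functions of the intervals $I_i(a)$. From there the paper proceeds qualitatively: it fixes a sequence $a^n\to\hat a$, proves $\mu$-a.e.\ pointwise convergence of the indicators $1_{I_i(a^n)}\to 1_{I_i(\hat a)}$, exhibits a uniform bound $\sigma$ on the integrand (via compactness of the sets $\{f(t,x(t),v_i):t\}$ and of the images of the pieces of $u_0$ along $x$), and invokes the Lebesgue dominated convergence theorem; continuity at $a=0$ is handled separately by Lemma \ref{lem33}. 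You instead prove the quantitative estimate $\Vert\Phi_{a'}(x)-\Phi_a(x)\Vert_b\leq 2C\sum_{i=1}^N\vert I_i(a')\,\triangle\,I_i(a)\vert$, and since the endpoints of $I_i(a)$ are affine in $a$, the symmetric differences are bounded by a constant times $\Vert a'-a\Vert$. This buys several things: no sequences, no dominated convergence, no separate treatment of $a=0$ (so Lemma \ref{lem33} is not needed for this lemma), and in fact a \emph{Lipschitz} modulus of continuity for $a\mapsto\Phi_a(x)$, which is stronger than what the paper states and proves. The paper's DCT argument, on the other hand, is the more robust pattern when the dependence of $u_a$ on $a$ is not through finitely many intervals with affine endpoints.

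One minor simplification available to you: the uniform bound $C$ does not require Lemma \ref{lem34} at all. Since $x\in\mathcal{X}$ is a \emph{fixed} continuous function, the set $\{(s,x(s),\zeta):s\in[0,T],\,\zeta\in M\}$ is compact (continuous image of $[0,T]\times M$), so $\sup\Vert f\Vert$ over this set is finite by continuity of $f$ alone; no Lipschitz transfer from $x_0$ to $x$ is needed, and no finite-dimensionality is involved either way. This is essentially how the paper bounds its integrand. Your detour through Lemma \ref{lem34} is correct, just unnecessary. Finally, your remark about shrinking $r_2$ to ensure disjointness of the $I_i(a)$ is legitimate and is in fact implicitly used by the paper too (its decomposition (\ref{eq312}) presupposes the same disjointness, announced after (\ref{eq31})), so this is a point of agreement rather than a discrepancy.
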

\begin{proof}
Lemma \ref{lem33} ensures the continuity of this mapping at $a =0$. Now we fix $\hat{a} \neq 0$. Let $(a^n)_{n \in \N}$ be a sequence  in $\overline{B}(0,r_2) \cap \R^N_+$ which converges toward $\hat{a}$. Note that we have
\begin{equation}\label{eq312}
\left.
\begin{array}{cl}
f(t,x(t),u_a(t)) = & 1_{[0,t_1)} f(t,x_0(t),u_0(t)) +\\
\null &  \sum_{i=1}^N 1_{[t_i + b_i(a), t_i + b_i(a) + a_i)} f(t, x(t), v_i) +\\
\null & \sum_{i=1}^{N -1} 1_{[t_i + b_i(a) + a_i, t_{i+1} + b_{i+1}(a))} f(t, x(t), u_0(t))\\
\null & + 1_{[t_N + b_N(a) + a_N, T]} f(t, x_0(t), u_0(t)).
\end{array}
\right\}
\end{equation}
We denote by $\mu$ the positive measure of Borel-Lebesgue of $[0,T]$. We have\\
$\lim_{n \rightarrow + \infty} 1_{[t_i + b_i(a^n), t_i + b_i(a^n) + a_i^n)} (t) =  1_{[t_i + b_i(\hat{a}), t_i + b_i(\hat{a}) + \hat{a}_i)}(t)$, $\mu$-a.e. $t \in [0,T]$ since the pointwise convergence is clear when $t \in (t_i + b_i(\hat{a}), t_i + b_i(\hat{a}) + \hat{a}_i)$ and when $t \in [0,T] \setminus [t_i + b_i(\hat{a}), t_i + b_i(\hat{a}) + \hat{a}_i]$, and a finite set is a $\mu$-null set. Similarly we obtain
$\lim_{n \rightarrow + \infty}1_{[t_i + b_i(a^n) + a_i^n, t_{i+1} + b_{i+1}(a^n))}(t) = 1_{[ t_i + b_i(\hat{a}) + \hat{a}_i, t_{i+1} + b_{i+1}(\hat{a}) )}(t)$, $\mu$-a.e. $t \in [0,T]$, $\lim_{n \rightarrow + \infty}
 1_{[t_N + b_N(a^n) + a^n_N, T]}(t) =  1_{[t_N + b_N(\hat{a}) + \hat{a}_N, T]}(t)$, $\mu$-a.e. $t \in [0,T]$. Since a finite union of $\mu$-null sets is a $\mu$-null set, using (\ref{eq312})  we obtain
\begin{equation}\label{eq313}
\lim_{n \rightarrow + \infty} \Vert f(t, x(t), u_{a^n}(t)) - f(t, x(t), u_{\hat{a}}(t)) \Vert = 0, \; \; \mu-a.e.\; \; t \in [0,T].
\end{equation}
Let $( \tau_j)_{0 \leq j \leq k+1}$ be a subdivision of $[0,T]$ such that the discontinuity points of $u_0$ belong to $\{ \tau_j : 0 \leq j \leq k \}$. When $j \in \{ 0, ..., k \}$ we use the function $u_0^j$ defined in (\ref{eq33}) and then $\{f(t, x(t), u_0^j(t)) : t \in [ \tau_j, \tau_{j+1}] \}$ is compact an an image of a compact set by a continuous function. Since $\{ f(t, x(t), u_0(t)) : t \in [0,T] \}$ is included in the finite union of compact sets $\bigcup_{0 \leq j \leq k} \{f(t, x(t), u_0^j(t)) : t \in [ \tau_j, \tau_{j+1}] \}$, it is bounded.
  For all $i \in \{ 1, ..., N \}$, the set $\{ f(t, x(t), v_i) : t \in [0,T] \}$ is compact under (A4). Note that $\{ f(t, x(t), u_a(t)) : t \in [0,T], a \in \overline{B}(0,r_2) \cap \R_+^N \}$  is included in $ \{ f(t, x(t), u_0(t)) : t \in [0,T] \} \cup (\cup_{1 \leq i \leq N} \{ f(t,x(t), v_i) : t \in [0,T] \})$. This last set is bounded as a finite union of bounded sets, hence there exists $\sigma \in \R_{+ *}$ such that, for all $t \in [0,T]$ and for all $a \in \overline{B}(0,r_2) \cap \R_+^N $, $\Vert f(t, x(t), u_a(t)) \Vert \leq \frac{\sigma}{2}$. Hence we have 
\begin{equation}\label{eq314}
\exists \sigma \in \R_{+*}, \forall n \in \N, \forall t \in [0,T], \Vert f(t,x(t), u_{a^n}(t)) - f(t, x(t), u_{\hat{a}}(t)) \Vert \leq \sigma.
\end{equation}
Note that the constant $\sigma$ is $\mu$-integrable on $|0,T]$, and that the functions $[t \mapsto \Vert f(t,x(t), u_{a^n}(t)) - f(t, x(t), u_{\hat{a}}(t)) \Vert ]$ is a Borel function on $[0,T]$ as a composition of Borel functions. Hence, using (\ref{eq313}) and (\ref{eq314}), we can use the theorem of the dominated convergence of Lebesgue and assert that we have 
\begin{equation}\label{eq315} 
\lim_{n \rightarrow + \infty} \int_0^T \Vert f(t, x(t), u_{a^n}(t)) - f(t, x(t), u_{\hat{a}}(t)) \Vert dt = 0.
\end{equation}
For all $n \in \N$, for all $t \in [0,T]$, we have \\
$e^{-L \cdot t} \Vert (\Phi_{a^n}(x) - \Phi_{\hat{a}}(x))(t) \Vert \leq e^{-L \cdot t} \int_0^t \Vert f(s, x(s), u_{a^n}(s)) - f(s, x(s), u_{\hat{a}}(s))\Vert ds$ \\
$\leq \int_0^T \Vert f(t, x(t), u_{a^n}(t)) - f(t, x(t), u_{\hat{a}}(t)) \Vert dt$, then  taking the sup on the $t \in [0,T]$, and using (\ref{eq315}), we obtain $\lim_{n \rightarrow + \infty} \Vert \Phi_{a^n}(x) - \Phi_{\hat{a}}(x) \Vert_b = 0$.
\end{proof}
\begin{proposition}\label{prop38}
The following assertions hold.
\begin{itemize}
\item[(i)] For all $a \in \overline{B}(0,r_2) \cap \R^N_+$, there exists a solution $x_a$ of the Cauchy problem (\ref{eq32}) which is defined on $[0,T]$ all over.
\item[(ii)] The mapping $[a \mapsto x_a]$, from $\overline{B}(0,r_2) \cap \R^N_+$  into $\mathcal{X}$, is continuous.
\end{itemize}
\end{proposition}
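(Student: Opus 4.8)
The plan is to obtain $x_a$ as the unique fixed point of the integral operator $\Phi_a$ and to read off both assertions from the parametrized fixed point theorem (Theorem \ref{th32}), all of whose hypotheses have been prepared by the preceding lemmas. First I would set $X := \mathcal{X}$, which is a complete metric space for the Bielecki metric (as noted after (\ref{eq37})), take $\Lambda := \overline{B}(0,r_2) \cap \R^N_+$ as the parameter space, and consider the mapping $\phi : \mathcal{X} \times \Lambda \rightarrow \mathcal{X}$ defined by $\phi(x,a) := \Phi_a(x)$. This is well defined with values in $\mathcal{X}$ precisely because of Lemma \ref{lem35}, which guarantees $\Phi_a(\mathcal{X}) \subset \mathcal{X}$ whenever $\Vert a \Vert \leq r_2$.

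Next I would verify the two structural hypotheses of Theorem \ref{th32}. Condition (a), that $\phi(x,\cdot) \in C^0(\Lambda, \mathcal{X})$ for each fixed $x$, is exactly the statement of Lemma \ref{lem37}. Condition (b), the uniform contraction estimate with a single ratio $k := 1 - e^{-L \cdot T}$, which lies in $[0,1)$ since $L > 0$ and $T > 0$, is exactly Lemma \ref{lem36}. Theorem \ref{th32} then delivers both conclusions simultaneously: for every $a \in \overline{B}(0,r_2) \cap \R^N_+$ there is a unique $x_a \in \mathcal{X}$ with $\Phi_a(x_a) = x_a$, and the mapping $[a \mapsto x_a]$ is continuous from $\overline{B}(0,r_2) \cap \R^N_+$ into $\mathcal{X}$. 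The latter is exactly assertion (ii).

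It then remains to check that this fixed point is genuinely a solution of the Cauchy problem (\ref{eq32}), which gives assertion (i). Unwinding $\Phi_a(x_a) = x_a$ produces the integral identity $x_a(t) = \xi_0 + \int_0^t f(s, x_a(s), u_a(s))\, ds$ for all $t \in [0,T]$, whence $x_a(0) = \xi_0$. I would then observe that the integrand $g_a := [s \mapsto f(s, x_a(s), u_a(s))]$ is piecewise continuous: $u_a$ is a normalized piecewise continuous function, coinciding with $u_0$ off the finitely many needle intervals $I_i(a)$ and equal to the constant $v_i$ on each $I_i(a)$, while $x_a$ is continuous and $f$ is continuous by (A4); hence $g_a \in NPC^0([0,T], E)$ with corners among the endpoints of the $I_i(a)$ and the discontinuity points of $u_0$. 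Since the primitive of a piecewise continuous function is piecewise continuously differentiable, and since $x_a \in \mathcal{X}$ forces $x_a([0,T]) \subset \Omega$ (as computed after (\ref{eq37})), we get $x_a \in PC^1([0,T], \Omega)$; the fundamental theorem of calculus combined with the one-sided-limit definition (\ref{eq21}) of $\underline{d}x_a$ yields $\underline{d}x_a(t) = g_a(t) = f(t, x_a(t), u_a(t))$ for every $t \in [0,T]$.

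The substance is already carried by Lemmas \ref{lem33}--\ref{lem37}, so the assembly through Theorem \ref{th32} is routine. The one genuinely new point, and the step I would treat most carefully, is the passage from the integral fixed point equation to the differential equation in the $\underline{d}$-sense: namely verifying that the one-sided derivatives of the primitive $x_a$ at its corners match the one-sided limits of $g_a$ prescribed by (\ref{eq21}). This is exactly where the normalization of $u_a$ and the piecewise continuity of $g_a$ are needed, and it is the only place where global definedness on all of $[0,T]$ (rather than mere local existence) has to be argued — here it comes for free because the fixed point lives in $\mathcal{X}$ and therefore never leaves $\Omega$.
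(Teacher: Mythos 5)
Your proposal is correct and follows essentially the same route as the paper's own proof: both apply the parametrized fixed point theorem (Theorem \ref{th32}) to the operator $\Phi_a$ on $\mathcal{X}$, with Lemmas \ref{lem35}, \ref{lem36} and \ref{lem37} supplying the invariance, uniform contraction and parameter-continuity hypotheses, and then pass from the integral fixed point identity to the Cauchy problem (\ref{eq32}) by observing that $[s \mapsto f(s,x_a(s),u_a(s))]$ lies in $NPC^0([0,T],E)$ so that its primitive is in $PC^1([0,T],E)$ and satisfies $\underline{d}x_a(t) = f(t,x_a(t),u_a(t))$. Your added care about $x_a([0,T]) \subset \Omega$ and the matching of one-sided derivatives with (\ref{eq21}) only makes explicit what the paper handles via the remark after (\ref{eq37}) and the citation of Bourbaki's result on differentiation of primitives.
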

\begin{proof}
From Lemma \ref{lem35}, Lemma \ref{lem36} and Lemma \ref{lem37} we can use Theorem \ref{th32} and assert that, for each $a \in \overline{B}(0,r_2) \cap \R^N_+$, there exists a unique fixed point $x_a$ of $\Phi_a$ in $\mathcal{X}$, and moreover we know that the mapping $[a \mapsto x_a]$ is continuous. From the definition (\ref{eq35}), we have $x_a(t) = \xi_0 + \int_0^t f(s, x_a(s), u_a(s)) ds$ for all $t \in [0,T]$. From (A4), we can see that the function $[s \mapsto f(s,x_a(s), u_a(s))]$ belongs to $NPC^0([0,T], E)$, and consequently the function $[t \mapsto \int_0^t f(s, x_a(s), u_a(s)) ds ]$ belongs to $PC^1([0,T],E)$, and using a classical result on the differentiation of the primitives functions (\cite{Bo}, chapter 2, Corollary 1, FVR. II6), we obtain that $\underline{d}x_a$ is well defined on $[0,T]$ and we have $\underline{d}x_a(t) = f(t, x_a(t), u_a(t))$ on $[0,T]$. We also have $x_a(0) = \xi_0$, and so $x_a$ is a solution of the Cauchy problem (\ref{eq32}). Hence the assertion (i) is proven, and the assertion (ii) results from the continuity of the fixed point with respect to $a$.
\end{proof}
%%%%%%%%%%%%%%%%%%%%%%%%%%%%%%%%%%%%%%%%%%%%%%%%%%%%%%%%%%%%%%%%%%%%%%%%%%%%%%%%%%%%%%%%%%%%%%%%%
\subsection{Properties of differentiability} In this subsection we establish the Fr\'echet differentiability of the mapping $[a \mapsto x_a(T)]$ at the origine.
\vskip1mm
\noindent
First we recall some properties of the resolvents. 
We consider the linear ODE $\underline{d}y(t) = D_2f(t, x_0(t), u_0(t)) y(t)$ when $t \in [0,T]$. Following the indications which are given in \cite{PB} (Chapter 18) we can assert that, denoting by $R(t,s)$ the resolvent of this linear equation, we have $R(t_3,t_1) = R(t_3, t_2)R(t_2, t_1)$, $R(s,s) = id_E$, $R(s,t) = R(t,s)^{-1}$, $R( \cdot, s) \in PC^1([0,T], \mathcal{L}(E,E))$. We define $\underline{d}_1 R(t,s) := \underline{d} R( \cdot, s)(t)$ and we have, for all $t \in [0,T]$, $\underline{d}_1 R(t,s) = D_2 f(t, x_0(t), u_0(t)) R(t,s)$, and from $R(t,s) = R(s,t)^{-1}$, we obtain that $R(t, \cdot) \in PC^1([0,T], \mathcal{L}(E,E))$. We set $\underline{d}_2R(t,s) := \underline{d}R(t, \cdot)(s)$.
\vskip1mm
\noindent
The second step is the following fundamental result due to Michel.
\begin{lemma}\label{lem39} (\cite{Mi2} Lemma 1)  There exist $r_3 \in (0, r_2)$, $\Lambda \in \mathcal{L}(\R^N, E)$ and a mapping $\varrho : \overline{B}(0, r_3) \cap \R^N_+ \rightarrow E$ such that $\lim_{a \rightarrow 0} \varrho(a) = 0$, and such that, for all $a \in \overline{B}(0,r_3) \cap \R^N_+$, we have
$x_a(T) = x_0(T) + \Lambda a + \Vert a \Vert \varrho(a)$.\\
More precisely, $\Lambda a = \sum_{i=1}^N a_i R(T,t_i)[f(t_i, x_0(t_i), v_i) - f(t_i, x_0(t_i), u_0(t_i))]$. 
\end{lemma}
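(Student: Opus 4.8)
The plan is to linearize the state equation around $(x_0,u_0)$ and to read off the increment $x_a(T)-x_0(T)$ through the resolvent $R(T,\cdot)$. Write $w_a:=x_a-x_0$ and set $A(t):=D_2f(t,x_0(t),u_0(t))$, so that $R$ is exactly the resolvent of $\underline{d}y=A(t)y$. First I would record the a priori Lipschitz estimate $\|w_a\|_b=O(\|a\|)$: since $x_a$ is the fixed point of $\Phi_a$, the contraction estimate of Lemma~\ref{lem36} together with the bound $\|\Phi_a(x_0)-x_0\|_b\le k\|a\|$ from Lemma~\ref{lem33} and the proof of Lemma~\ref{lem35} give $\|w_a\|_b\le(1-e^{-L\cdot T})\|w_a\|_b+k\|a\|$, whence $\|w_a\|_b\le e^{L\cdot T}k\|a\|$ and therefore $\|w_a\|_\infty=O(\|a\|)$; in particular $w_a\to 0$ uniformly as $a\to 0$.

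Next, from $\underline{d}x_a=f(t,x_a,u_a)$ and $\underline{d}x_0=f(t,x_0,u_0)$ I would write the perturbed variational equation $\underline{d}w_a(t)=A(t)w_a(t)+\phi_a(t)$, where $\phi_a(t):=f(t,x_a(t),u_a(t))-f(t,x_0(t),u_0(t))-A(t)w_a(t)$, and then invoke the variation-of-constants formula in the $PC^1$ setting. Its legitimacy rests on the identity $\underline{d}_2R(T,s)=-R(T,s)A(s)$, which follows by differentiating $R(T,s)R(s,T)=id_E$ and using $\underline{d}_1R=A R$ as recalled before the statement; indeed $\frac{d}{ds}[R(T,s)w_a(s)]=R(T,s)\phi_a(s)$, and integrating over $[0,T]$ with $w_a(0)=0$ gives
$$w_a(T)=\int_0^T R(T,s)\,\phi_a(s)\,ds.$$
The whole question then reduces to the asymptotics of this integral, which I would split over the needles $I_i(a)$ and their complement.

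On the complement of $\bigcup_i I_i(a)$, where $u_a=u_0$, the term $\phi_a(s)$ is a pure Taylor remainder in the state variable; by the Mean Value Inequality, $\phi_a(s)=\int_0^1[D_2f(s,x_0(s)+\theta w_a(s),u_0(s))-A(s)]w_a(s)\,d\theta$, and using the uniform continuity of $D_2f$ on the compact $\Gamma\times M$ (Theorem~\ref{th31}) together with $\|w_a\|_\infty=O(\|a\|)$ I would bound $\|\phi_a(s)\|\le\epsilon(a)\|w_a(s)\|$ with $\epsilon(a)\to 0$, so this part contributes $o(\|a\|)$ to $w_a(T)$. On each needle $I_i(a)$, of length $a_i$, we have $u_a=v_i$; there I would discard $R(T,s)A(s)w_a(s)$ (whose contribution is $O(a_i\|a\|)$, summing to $o(\|a\|)$) and analyze $\int_{I_i(a)}R(T,s)[f(s,x_a(s),v_i)-f(s,x_0(s),u_0(s))]\,ds$. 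Since $I_i(a)$ shrinks to $t_i$ from the right as $a\to 0$, the continuity of $R(T,\cdot)$, the uniform convergence $x_a\to x_0$, the continuity of $f$, and the right-continuity $u_0(t_i+)=u_0(t_i)$ (valid because $u_0\in NPC^0([0,T],U)$ and $t_i<T$) force the integrand to converge uniformly on $I_i(a)$ to $R(T,t_i)[f(t_i,x_0(t_i),v_i)-f(t_i,x_0(t_i),u_0(t_i))]$; hence that integral equals $a_i R(T,t_i)[f(t_i,x_0(t_i),v_i)-f(t_i,x_0(t_i),u_0(t_i))]+a_i\,\omega_i(a)$ with $\omega_i(a)\to 0$. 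Summing the three estimates and setting $\Lambda a:=\sum_{i=1}^N a_i R(T,t_i)[f(t_i,x_0(t_i),v_i)-f(t_i,x_0(t_i),u_0(t_i))]$ yields $x_a(T)=x_0(T)+\Lambda a+\|a\|\varrho(a)$ with $\varrho(a)\to 0$ and $\Lambda\in\mathcal{L}(\R^N,E)$; choosing $r_3\in(0,r_2)$ small enough that the $I_i(a)$ are disjoint and contained in $[0,T)$ finishes the proof. The main obstacle is the uniform control of the Taylor remainder $\phi_a$ off the needles: one must upgrade the merely pointwise differentiability of $f$ in its second argument to an estimate uniform in $t\in[0,T]$ and in the state over a tube around $\Gamma$, which is precisely where Theorem~\ref{th31} is indispensable (it avoids any finite-dimensionality hypothesis on $E$); the needle estimates themselves are softer, relying only on the shrinking of $I_i(a)$ to $t_i$ and the continuity of the data.
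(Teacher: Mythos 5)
Your argument is correct, but it is worth noting that the paper itself does not prove Lemma \ref{lem39} at all: the authors import it as a black box from Michel (\cite{Mi2}, Lemma 1), which is precisely why it is labelled ``fundamental result due to Michel''. What you have done is supply a self-contained proof built entirely from the paper's own infrastructure: the fixed-point/Bielecki-norm estimates of Lemmas \ref{lem33}--\ref{lem36} give the a priori bound $\Vert x_a - x_0 \Vert_\infty = O(\Vert a \Vert)$; Theorem \ref{th31} applied to $D_2f$ on the compact $\Gamma \times M$ controls the Taylor remainder off the needles (this is indeed the step that keeps the argument free of any finite-dimensionality assumption on $E$); and the resolvent identity $\underline{d}_2R(T,s) = -R(T,s)D_2f(s,x_0(s),u_0(s))$, which is exactly the paper's formula (\ref{eq42}) derived in Section 4, legitimizes the variation-of-constants representation $x_a(T)-x_0(T)=\int_0^T R(T,s)\phi_a(s)\,ds$, after which the needle/off-needle splitting yields $\Lambda$ and the $o(\Vert a\Vert)$ remainder. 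Two points deserve to be made explicit if this were to be inserted into the text: first, the uniform-continuity step requires $(s,x_0(s),u_0(s))\in\Gamma\times M$ for every $s$, which holds precisely because $u_0\in NPC^0([0,T],U)$ (the normalization guarantees $u_0(s)\in M$ at the discontinuity points and at $T$); second, the needle estimate uses $u_0(t_i+)=u_0(t_i)$ and $b_i(a)\to 0$, both of which you invoke correctly. In short: the paper's route (citation) keeps the exposition short but leaves the key quantitative lemma external; your route makes the development self-contained, in the same functional-analytic spirit the authors advertise, at the cost of roughly a page of estimates.
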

The following result proves that the mapping $[a \mapsto x_a(T)]$ is a restriction of a mapping (defined on a neighborhood of the origine  in $\R^N$) which is Fr\'echet differentiable at the origine.
\begin{proposition}\label{prop310}
The constant $r_3$ and the linear mapping $\Lambda$ are provided by Lemma \ref{lem39}. There exist $r_4 \in (0, r_3]$ and  a mapping $\kappa \in C^0(\overline{B}(0,r_4), \Omega)$ which is Fr\'echet differentiable at $a = 0$ and which satisfies, for all  $a \in  \overline{B}(0,r_3) \cap \R^N_+$, $\kappa(a) = x_a(T)$, and $D \kappa (0) = \Lambda$.
\end{proposition}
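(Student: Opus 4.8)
The plan is to extend the map $[a \mapsto x_a(T)]$, which is only defined on the nonnegative orthant $\overline{B}(0,r_3) \cap \R^N_+$, to a genuine neighborhood of the origin in $\R^N$ while preserving the first-order expansion supplied by Lemma \ref{lem39}. The natural device is the componentwise positive part $a^+ := (\max\{a_1,0\}, \dots, \max\{a_N,0\})$, which maps $\R^N$ into $\R^N_+$, restricts to the identity on $\R^N_+$, sends $0$ to $0$, and is nonexpansive in the sense that $\Vert a^+ \Vert \leq \Vert a \Vert$ (indeed it is $1$-Lipschitz for the usual norm). I would then define, on $\overline{B}(0,r_3)$, the candidate $\tilde{\kappa}(a) := x_0(T) + \Lambda a + \Vert a^+ \Vert \, \varrho(a^+)$. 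This makes sense because $\Vert a^+ \Vert \leq \Vert a \Vert \leq r_3$ guarantees $a^+ \in \overline{B}(0,r_3) \cap \R^N_+$, the domain of $\varrho$.

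Before using this formula I must check that $\varrho$ is continuous on the orthant, including at the origin. First I set $\varrho(0) := 0$; then $\lim_{a \to 0} \varrho(a) = 0$ from Lemma \ref{lem39} gives continuity at $0$. Away from the origin, Proposition \ref{prop38} ensures $[a \mapsto x_a]$ is continuous into $(\mathcal{X}, \Vert \cdot \Vert_b)$, hence $[a \mapsto x_a(T)]$ is continuous on $\overline{B}(0,r_2) \cap \R^N_+$ (evaluation at $T$ is $\Vert \cdot \Vert_b$-continuous); rewriting $\varrho(a) = \Vert a \Vert^{-1}(x_a(T) - x_0(T) - \Lambda a)$ exhibits $\varrho$ as a quotient of continuous maps with nonvanishing denominator, so $\varrho$ is continuous on $(\overline{B}(0,r_3) \cap \R^N_+) \setminus \{0\}$ as well. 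Consequently $a \mapsto \varrho(a^+)$ and $a \mapsto \Vert a^+ \Vert$ are continuous, so $\tilde{\kappa}$ is continuous on $\overline{B}(0,r_3)$. Since $a^+ = a$ on $\R^N_+$, we obtain $\tilde{\kappa}(a) = x_a(T)$ for every $a \in \overline{B}(0,r_3) \cap \R^N_+$, and in particular $\tilde{\kappa}(0) = x_0(T)$.

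For differentiability at the origin I would estimate $\tilde{\kappa}(a) - \tilde{\kappa}(0) - \Lambda a = \Vert a^+ \Vert \, \varrho(a^+)$; dividing its norm by $\Vert a \Vert$ and using $\Vert a^+ \Vert \leq \Vert a \Vert$ gives the bound $\Vert \varrho(a^+) \Vert$, which tends to $0$ as $a \to 0$ because $a^+ \to 0$ and $\varrho$ is continuous at $0$ with $\varrho(0) = 0$. Hence $\tilde{\kappa}$ is Fr\'echet differentiable at $0$ with $D\tilde{\kappa}(0) = \Lambda$. Finally, since $\Omega$ is open, $\tilde{\kappa}$ is continuous, and $\tilde{\kappa}(0) = x_0(T) \in \Omega$, I can choose $r_4 \in (0,r_3]$ small enough that $\tilde{\kappa}(\overline{B}(0,r_4)) \subset \Omega$ and set $\kappa := \tilde{\kappa}|_{\overline{B}(0,r_4)}$; this $\kappa$ enjoys all the required properties, the identity $\kappa(a) = x_a(T)$ holding on $\overline{B}(0,r_4) \cap \R^N_+$.

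The step I expect to require the most care is the continuity of the remainder $\varrho$, since Lemma \ref{lem39} records only its limit at the origin; the argument above recovers continuity elsewhere from the continuity of $[a \mapsto x_a]$ established in Proposition \ref{prop38}. The remainder of the proof is a clean application of the nonexpansiveness $\Vert a^+ \Vert \leq \Vert a \Vert$, which is precisely what converts the orthant-only expansion of Lemma \ref{lem39} into genuine Fr\'echet differentiability on a full ball, the feature needed to later invoke a multiplier rule at an interior point.
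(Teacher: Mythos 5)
Your proof is correct and follows essentially the same route as the paper: the paper also extends $[a \mapsto x_a(T)]$ by composing the remainder $\varrho$ (made continuous via Proposition \ref{prop38}) with the best-approximation projector $\pi$ of $\R^N$ onto the cone $\R^N_+$, which for the Euclidean norm is exactly your componentwise positive part $a^+$, and then shrinks the radius so the extension lands in $\Omega$. The only cosmetic difference is that the paper writes $\kappa(a) = x_0(T) + \Lambda a + \Vert a \Vert\, \varrho(\pi(a))$ with the factor $\Vert a \Vert$ rather than your $\Vert a^+ \Vert$; both choices yield the same restriction to the orthant and the same differentiability argument at $0$.
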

\begin{proof}
As a norm on $\R^N$ we choose the norm associated to the usual inner product. We denote by $\pi$ the best approximation projector from $\R^N$ on the closed convex cone $\R^N_+$, \cite{Au} (p. 18, Theorem 1). We know that $\pi$ is $1$-Lipschitzean. It is easy to verify that $\pi (\overline{B}(0,r_3)) \subset (\overline{B}(0,r_3) \cap \R^N_+)$. Using Proposition \ref{prop38} note that the mapping $\varrho $ is continuous on $\overline{B}(0,r_3) \cap \R^N_+$ since we have 
\[
\varrho(a) = \left\{
\begin{array}{ccl}
\frac{1}{\Vert a \Vert} (x_a(T) - x_0(T) -\Lambda a) & {\rm if} & a \neq 0\\
0 & {\rm if} & a =0.
\end{array}
\right.
\]
We set $\overline{\varrho} := \varrho \circ \pi \in C^0(\overline{B}(0, r_3), E)$. We define $\kappa : \overline{B}(0,r_3) \rightarrow E$ by setting $\kappa(a) := x_0(T) + \Lambda a + \Vert a \Vert \overline{\varrho}(a)$. Then $\kappa$ is continuous since $\Lambda$ and $\overline{\varrho}$ are continuous. We have also $\lim_{a \rightarrow 0} \overline{\varrho}(a) = \varrho(0) = 0$ which implies that $\kappa$ is Fr\'echet differentiable at $0$, and  that $D \kappa(0) = \Lambda $. 
Since $x_0(T) \in \Omega$ with $\Omega$ open, since $\lim_{a \rightarrow 0}(\Lambda a + \Vert a \Vert \overline{\varrho}(a)) = 0$, reducing $r_3$ to $r_4 \in (0, r_3]$ we can assert that $\kappa (\overline{B}(0, r_4)) \subset \Omega$.
\end{proof}
%%%%%%%%%%%%%%%%%%%%%%%%%%%%%%%%%%%%%%%%%%%%%%%%%%%%%%%%%%%%%%%%%%%%%%%%%%%%%%%%%%%%%%%%%%%%%%%%%%%%%%%%%
\section{Proof of the principle for the problem of Mayer}
We describe the general method. When we fix $S = ((t_i, v_i))_{1 \leq i \leq N} \in \S$, we reduce the initial dynamic problem of Mayer to a finite-dimensional static optimization problem where the unknow is the vector $a$ of the thicknessess of the needles. Using a multiplier rule on this static problem we obtain a list of multipliers which is dependent on $S$. This is the matter of the first subsection. \\
In the second subsection we prove that we can choose such a list of multipliers which is independent of $S \in \S$, and from this particular list we build the multipliers and the adjoint function of Theorem \ref{th12}.
\subsection{Reduction to the finite dimension}
We arbitrarily fix $S \in \S$. Since $(x_0,u_0)$ is optimal for $(\mathcal{M}')$, $0$ is a solution of the following finite-dimensional optimization problem
\[
(\mathcal{F}_S) :=
\left\{
\begin{array}{cl}
{\rm Maximize} & g^0(x_a(T))\\
{\rm subject} \:\: {\rm to} & a \in {B}(0, r_4) \cap \R^N_+ \\
\null & \forall \alpha = 1, ..., m, \; \; g^{\alpha}(x_a(T)) \geq 0\\
\null & \forall \beta = 1, ..., q, \; \; h^{\beta} (x_a(T)) = 0.
\end{array}
\right.
\]
Using the mapping $\kappa$ of Proposition \ref{prop310} and $(b^*_i)_{1 \leq i \leq N}$, the dual basis of the canonical basis of $\R^N$, $0$ is also solution of the following finite-dimensional optimization problem
\[
(\mathcal{F}_S^1) :=
\left\{
\begin{array}{cl}
{\rm Maximize} & g^0(\kappa (a)) \\
{\rm subject} \; \; {\rm to} & a \in B(0, r_4) \\
\null & \forall \alpha = 1, ..., m, \; \; g^{\alpha} ( \kappa (a)) \geq 0 \\
\null & \forall \beta = 1, ..., q, \; \; h^{\beta}( \kappa (a)) = 0 \\
\null & \forall i = 1, ..., N, \; \; b^*_i a \geq 0
\end{array}
\right.
\]
 since, when $a \in B(0, r_4)$ is admissible for $(\mathcal{F}_S^1)$ then necessarily we have $a \in B(0, r_4) \cap \R^N_+$. The interest to introduce $(\mathcal{F}_S^1)$ is that this problem enters into the setting of the multiplier rule of \cite{Bl} while it is not the case for $(\mathcal{F}_S)$.\\
 Note that Michel in \cite{Mi2} works on $(\mathcal{F}_S)$, not on $(\mathcal{F}_S^1)$. To do that, he uses a multiplier rule given in \cite{Mi1}, which concerns problems on a convex cone. 
\begin{lemma}\label{lem41} Let $S = ((t_i,v_i))_{1 \leq i \leq N} \in \S$. There exist $(\lambda_{\alpha})_{0 \leq \alpha \leq m} \in \R^{1 + m}$ and $(\mu_{\beta})_{1 \leq \beta \leq q} \in \R^q$ which satisfy the following conditions.
\begin{itemize}
\item[(a)] $(\lambda_{\alpha})_{0 \leq \alpha \leq m}$ and $(\mu_{\beta})_{1 \leq \beta \leq q}$ are not simulteanous equal to zero.
\item[(b)] $\forall \alpha = 0, ..., m$, \; \; $\lambda_{\alpha} \geq 0$.
\item[(c)] $\forall \alpha = 1,...,m$, \; \; $\lambda_{\alpha}g^{\alpha}(x_0(T)) = 0$.
\item[(d)] $\forall i = 1,...,N$, $p(t_i) [f(t_i, x_0(t_i), v_i) - f(t_i, x_0(t_i), u_0(t_i))] \leq 0$, where\\
$p(t) := (\sum_{\alpha = 0}^m \lambda_{\alpha} D g^{\alpha}(x_0(T)) + \sum_{\beta = 1}^q \mu_{\beta} Dh^{\beta}(x_0(T))) R(T,t)$, $R(t,s)$ being defined just before Lemma \ref{lem39}.
\end{itemize}
\end{lemma}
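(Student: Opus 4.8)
The plan is to apply the static multiplier rule of \cite{Bl} to the finite-dimensional problem $(\mathcal{F}_S^1)$, for which $0$ is an optimal point, and then translate the abstract conclusion into the four concrete conditions (a)--(d). The crucial preliminary observation is that $(\mathcal{F}_S^1)$ is a smooth-enough static problem: by Proposition \ref{prop310}, $\kappa$ is Fr\'echet differentiable at $0$ with $D\kappa(0)=\Lambda$; the objective $g^0\circ\kappa$ and the inequality/equality constraints $g^\alpha\circ\kappa$, $h^\beta\circ\kappa$ are therefore differentiable at $a=0$ by the chain rule, using (A1) and (A2); and the extra constraints $b_i^*a\ge 0$ are linear, hence trivially differentiable. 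So the hypotheses needed to invoke \cite{Bl} are met at the candidate point $0$.

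First I would write down the conclusion of the multiplier rule of \cite{Bl} applied to $(\mathcal{F}_S^1)$ at $a=0$. This furnishes nonnegative multipliers $\lambda_0,\dots,\lambda_m$ attached to the objective $g^0\circ\kappa$ and to the inequalities $g^\alpha\circ\kappa\ge 0$, real multipliers $\mu_1,\dots,\mu_q$ attached to the equalities $h^\beta\circ\kappa=0$, and nonnegative multipliers $\nu_1,\dots,\nu_N$ attached to the sign constraints $b_i^*a\ge 0$, not all zero, satisfying the stationarity (Euler--Lagrange) identity in $(\R^N)^*$
\[
\sum_{\alpha=0}^m \lambda_\alpha D(g^\alpha\circ\kappa)(0)+\sum_{\beta=1}^q \mu_\beta D(h^\beta\circ\kappa)(0)+\sum_{i=1}^N \nu_i b_i^* = 0,
\]
together with the complementary slackness conditions $\lambda_\alpha g^\alpha(\kappa(0))=0$ for $\alpha=1,\dots,m$ and $\nu_i b_i^*\!\cdot 0 = 0$ (the latter automatic). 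Since $\kappa(0)=x_0(T)$ by Proposition \ref{prop310}, the slackness conditions are exactly (c), and the sign conditions on the $\lambda_\alpha$ give (b).

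Next I would use the chain rule $D(g^\alpha\circ\kappa)(0)=Dg^\alpha(x_0(T))\circ\Lambda$ and likewise for $h^\beta$, so that the stationarity identity reads
\[
\Bigl(\sum_{\alpha=0}^m \lambda_\alpha Dg^\alpha(x_0(T))+\sum_{\beta=1}^q \mu_\beta Dh^\beta(x_0(T))\Bigr)\circ\Lambda + \sum_{i=1}^N \nu_i b_i^* = 0.
\]
Introducing the covector $\pi := \sum_{\alpha} \lambda_\alpha Dg^\alpha(x_0(T))+\sum_{\beta}\mu_\beta Dh^\beta(x_0(T)) \in E^*$, and evaluating the identity on the $i$-th canonical basis vector $e_i$ of $\R^N$, I get $\pi(\Lambda e_i)+\nu_i = 0$, i.e. $\nu_i = -\pi(\Lambda e_i)$. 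By the explicit formula for $\Lambda$ in Lemma \ref{lem39}, $\Lambda e_i = R(T,t_i)[f(t_i,x_0(t_i),v_i)-f(t_i,x_0(t_i),u_0(t_i))]$, so $\nu_i = -\,p(t_i)[f(t_i,x_0(t_i),v_i)-f(t_i,x_0(t_i),u_0(t_i))]$ with $p(t)$ defined as in (d) via $p(t)=\pi\circ R(T,t)$. Since $\nu_i\ge 0$, this yields precisely (d).

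The remaining point is (a), the nontriviality, and this is the step I expect to be the main obstacle. The multiplier rule only guarantees that the full list $(\lambda_\alpha,\mu_\beta,\nu_i)$ is not all zero, whereas (a) asserts that already the subfamily $(\lambda_\alpha,\mu_\beta)$ is nonzero. I would argue by contradiction: if all $\lambda_\alpha$ and all $\mu_\beta$ vanished, then $\pi=0$, whence $\nu_i=-\pi(\Lambda e_i)=0$ for every $i$, forcing the entire multiplier list to be zero and contradicting the nontriviality from \cite{Bl}. Hence $(\lambda_\alpha)$ and $(\mu_\beta)$ cannot all vanish, establishing (a) and completing the lemma. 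I would double-check that the precise form of the multiplier rule in \cite{Bl} does give genuine nontriviality of the whole list (and not merely of the objective-plus-constraint multipliers modulo the sign-constraint ones), since the whole elimination of the $\nu_i$ hinges on that.
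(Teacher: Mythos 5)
Your proposal is correct and follows essentially the same route as the paper: both apply the multiplier rule of \cite{Bl} to $(\mathcal{F}_S^1)$ at $a=0$, read off (b) and (c) from the sign and slackness conditions, obtain (d) from the stationarity identity combined with $\nu_i \geq 0$ (you evaluate on the basis vectors $e_i$, the paper tests against all $a \in \R_+^N$, which is the same computation), and prove (a) by contradiction via the vanishing of the $\nu_i$. Your closing worry is settled by the paper: the rule in \cite{Bl} does assert nontriviality of the full list $((\lambda_\alpha),(\mu_\beta),(\nu_i))$, which is exactly what the elimination argument needs.
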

\begin{proof}
Using Proposition \ref{prop310}, (A1) and (A2), the assumptions of Theorem 3.2 in \cite{Bl} are fulfilled, and so we know that there exist $(\lambda_{\alpha})_{0 \leq \alpha \leq m} \in \R^{1 + m}$, $(\mu_{\beta})_{1 \leq \beta \leq q} \in \R^q$, and $(\nu_i)_{1 \leq i \leq N} \in \R^N$ such that the following conditions are fulfilled.
\begin{itemize}
\item[(i)] $(\lambda_{\alpha})_{0 \leq \alpha \leq m}$, $(\mu_{\beta})_{1 \leq \beta \leq q}$ and $(\nu_i)_{1 \leq i \leq N}$ are not simultaneously equal to zero.
\item[(ii)] $\forall \alpha = 0,..., m$, $\lambda_{\alpha} \geq 0$.
\item[(iii)] $\forall i = 1,...,N$, $\nu_i \geq 0$.
\item[(iv)] $\forall \alpha = 1,..., m$, $\lambda_{\alpha} g^{\alpha}(x_0(T)) = 0$.
\item[(v)] $\forall i = 1,...,N$, $\nu_i b^*_i0 = 0$.
\item[(vi)] $\sum_{\alpha = 0}^m \lambda_{\alpha} D g^{\alpha}(x_0(T))  D \kappa (0) + \sum_{\beta = 1}^q \mu_{\beta} Dh^{\beta}(x_0(T))  D \kappa (0) + \sum_{i = 1}^N \nu_i b^*_i = 0$.
\end{itemize}
To prove (a), we proceed by contradiction, we assume that $(\lambda_{\alpha})_{0 \leq \alpha \leq m}$ and $(\mu_{\beta})_{1 \leq \beta \leq q}$ are equal to zero. Hence, using (i), we have $(\nu_i)_{1 \leq i \leq N}$ different to zero. Using (vi) we obtain $\sum_{i = 1}^N \nu_i b^*_i = 0$, and since the $b^*_i$ are linearly independent we obtain that $(\nu_i)_{1 \leq i \leq N}$  is equal to zero: this is a contradiction. Consequently (a) is proven. Assertion (b) comes from (i) and  (c) comes from (iv). When $a \in \R^N_+$, using (iii), we have $\nu_i a_i \geq 0$, and from (vi) we obtain
$$\sum_{\alpha = 0}^m \lambda_{\alpha} D g^{\alpha}(x_0(T)) \circ D \kappa (0)a + \sum_{\beta = 1}^q \mu_{\beta} Dh^{\beta}(x_0(T)) \circ D \kappa (0)a + \sum_{i = 1}^N \nu_i a_i= 0$$
which implies the following relation, for all $a \in \R^N_+$,
\begin{equation}\label{eq41}
\sum_{\alpha = 0}^m \lambda_{\alpha} D g^{\alpha}(x_0(T)) \circ D \kappa (0)a + \sum_{\beta = 1}^q \mu_{\beta} Dh^{\beta}(x_0(T)) \circ D \kappa (0)a \leq 0.
\end{equation}
Since $D \kappa (0) a = \sum_{i=1}^N a_i R(T, t_i) [ f(t_i, x_0(t_i), v_i) - f (t_i, x_0(t_i), u_0(t_i))]$, the relation \ref{eq41}) is equivalent to 
$$\forall a \in \R^N_+, \;\; \sum_{i=1}^N a_i p(t_i) [ f(t_i, x_0(t_i), v_i) - f (t_i, x_0(t_i), u_0(t_i))] \leq 0,$$
which is equivalent to the conclusion (d).
\end{proof}
%
%%%%%%%%%%%%%%%%%%%%%%%%%%%%%%%%%%%%%%%%%%%%%%%%%%%%%%%%%%%%%%%%%%%%%%%%%%%%%%%%%%%%%%
\subsection{End of the proof of Part (I)}
In this subsection we follow \cite{Mi2}. Since the set of the lists of multipliers is a cone, we can normalized them by adding the condition $\sum_{\alpha = 0}^m \vert \lambda_{\alpha} \vert + \sum_{\beta = 1}^q \vert \mu_{\beta} \vert = 1$. When $S \in \S$, we define $K(S)$ as the set of the $((\lambda_{\alpha})_{0 \leq \alpha \leq m}, (\mu_{\beta})_{1 \leq \beta \leq q})$ which verify the conclusions (a, b, c, d) of Lemma \ref{lem41} and the additional condition $\sum_{\alpha = 0}^m \vert \lambda_{\alpha} \vert + \sum_{\beta = 1}^q \vert \mu_{\beta} \vert = 1$. Denoting by $\Sigma(0,1)$ the unit sphere of $\R^{1+m+q}$, we have $K(S) \subset \Sigma (0,1)$, $K(S)$ is closed since it is defined by wide inequalities and equalities, If $(S^{\ell})_{1 \leq \ell \leq n} = ( (t_i^{\ell}, v_i^{\ell})_{1 \leq i \leq N^{\ell}})_{1 \leq {\ell} \leq n}$ is a finite family of elements of $\S$, then setting $N := \sum_{{\ell}  = 1}^n N^{\ell}$, we can build $0 < s_1 \leq s_2 \leq ... \leq s_N < T$ and $w_1, w_2,..., w_N \in U$ such that $\bar{S} = (s_j, w_j)_{1 \leq j \leq N} \in \S$ and such that, for all ${\ell} \in \{ 1,...,   n \}$, for all $i \in \{1,...,N^{\ell} \}$, there exists a unique $j \in \{1,...,N \}$ verifying $t^{\ell}_i = s_j$; and then we take $w_j := v^{\ell}_i$. Note that, for all ${\ell} \in \{1,...,n \}$, the values of $S^{\ell}$ belong to the values of $\bar{S}$. If $((\lambda_{\alpha})_{0 \leq \alpha \leq m}, (\mu_{\beta})_{ 1 \leq \beta \leq q}) \in K(\bar{S})$, the conclusions 
 (a, b, c, d) of Lemma \ref{lem41} are satisfied for the values of ${S}$, they are also satisfied for the values of $S^{\ell}$ for alll ${\ell} \in \{1,...,n \}$, which implies that $((\lambda_{\alpha})_{0 \leq \alpha \leq m}, (\mu_{\beta})_{ 1 \leq \beta \leq q}) \in  \bigcap_{1 \leq \ell \leq n} K(S^{\ell}) \neq \emptyset$. Hence, this last finite intersection is nonempty.\\
Since $\Sigma (0,1)$ is compact, the finite intersection property of the closed subsets of $\Sigma (0,1)$ implies that $\bigcap_{S \in \S} K(S) \neq \emptyset$, \cite{DG} (p. 154, Appendix). Now we choose an element $((\lambda_{\alpha})_{0 \leq \alpha \leq m}, (\mu_{\beta})_{1 \leq \beta \leq q})$ in $\bigcap_{S \in \S} K(S)$, and we consider $p$ defined in the conclusion (d) of Lemma \ref{lem41} for this chosen $((\lambda_{\alpha})_{0 \leq \alpha \leq m}, (\mu_{\beta})_{1 \leq \beta \leq q})$. After the building of the $K(S)$, we see that the conclusions (NN), (Si) and (S$\ell$) are proven.
\vskip1mm
\noindent
We take $t \in (0,T)$ and $v \in U$, and then we have $(t,v) \in \S$. Then the conclusion (d) of Lemma \ref{lem41} implies $p(t) [f(t,x_0(t),v) - f(t, x_0(t), u_0(t))] \leq 0$. Doing $t \rightarrow 0+$ and $t \rightarrow T-$, we obtain the inequality for all $t \in [0,T]$. Hence the conclusion (MP.M) is proven.
\vskip1mm
\noindent
Now we want to prove that $p$ is a solution of the adjoint equation. Using the differentiability of $R( \cdot, s)$ outside of a finite set, $R(t,s) = R(s,t)^{-1}$, the Fr\'echet differentiability of the inversion operator $\mathcal{I} : Isom(E,E) \rightarrow Isom(E,E)$, $\mathcal{I}(L) := L^{-1}$, and the chain rule we obtain the following formula.
\begin{equation}\label{eq42}
\underline{d}_2 R(T,t) = - R(T,t) D_2f(t, x_0(t), u_0(t)).
\end{equation}
Differentiating $p(t) = (\sum_{\alpha = 0}^m \lambda_{\alpha} D g^{\alpha}(x_0(T)) + \sum_{\beta = 1}^q \mu_{\beta} D h^{\beta}(x_à(T)))  R(T,t)$ with respect to $t$, we obtain\\
$\underline{d} p(t) = (\sum_{\alpha = 0}^m \lambda_{\alpha} D g^{\alpha}(x_0(T)) + \sum_{\beta = 1}^q \mu_{\beta} D h^{\beta}(x_à(T))) \underline{d}_2 R(T,t)$  and using (\ref{eq42}), we obtain\\
$\underline{d} p(t) = (\sum_{\alpha = 0}^m \lambda_{\alpha} D g^{\alpha}(x_0(T)) + \sum_{\beta = 1}^q \mu_{\beta} D h^{\beta}(x_à(T)))(- R(T,t) D_2f(t, x_0(t), u_0(t))$ \\
$ = - p(t) D_2f(t, x_0(t), u_0(t)) = -  D_2 H_M(t, x_0(t), u_0(t), p(t))$, and so $p$ satisfies (AE).
\vskip1mm
\noindent
From the equality $R(T,T) = id_E$ and from the formula which defines $p$ we see that the conclusion (TC) holds. To prove (CH.M) we need the following result.
\begin{lemma}\label{lem42}
Let $\phi \in C^0([0,T] \times U, \R)$ and $u \in NPC^0([0,T], U)$ such that $\phi(t, u(t)) = \max_{\zeta \in U} \phi(t, \zeta)$ for all $t \in [0,T]$. Then $\bar{\phi} := [ t \mapsto \phi(t, u(t))] \in C^0([0,T], \R)$.
\end{lemma}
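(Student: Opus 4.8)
The plan is to exploit the normalized piecewise structure of $u$ together with the maximality condition $\phi(t,u(t)) = \max_{\zeta \in U}\phi(t,\zeta)$. Since $u \in NPC^0([0,T],U)$, fix a subdivision $0 = \tau_0 < \tau_1 < \cdots < \tau_{k+1} = T$ whose interior points contain the discontinuities of $u$, so that $u$ is continuous on each $[\tau_i, \tau_{i+1})$, is right continuous on $[0,T)$, and satisfies $u(T-) = u(T)$. On each such interval the map $t \mapsto (t, u(t))$ is continuous into $[0,T] \times U$, so by composition with $\phi \in C^0([0,T]\times U, \R)$ the function $\bar\phi = [t \mapsto \phi(t,u(t))]$ is continuous there; in particular $\bar\phi$ is right continuous at every point of $[0,T)$. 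It remains only to establish left continuity at each corner $\tau_i$, $i \in \{1,\dots,k+1\}$, since continuity on the open intervals is already secured.

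The crux is a squeeze argument at a fixed $\tau_i$. Because the left-hand limit $u(\tau_i-)$ exists in $U$ and $\phi$ is jointly continuous, we have $\lim_{t \to \tau_i-}\bar\phi(t) = \lim_{t \to \tau_i-}\phi(t,u(t)) = \phi(\tau_i, u(\tau_i-))$. For any $t < \tau_i$ the maximality condition gives $\bar\phi(t) = \max_{\zeta \in U}\phi(t,\zeta) \geq \phi(t, u(\tau_i))$, where $u(\tau_i)$ is a fixed element of $U$; letting $t \to \tau_i-$ and using continuity of $\phi$ in its first variable yields $\phi(\tau_i, u(\tau_i-)) \geq \phi(\tau_i, u(\tau_i))$. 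Conversely, applying the maximality condition at the point $\tau_i$ itself gives $\phi(\tau_i, u(\tau_i)) = \max_{\zeta \in U}\phi(\tau_i,\zeta) \geq \phi(\tau_i, u(\tau_i-))$. Combining the two inequalities forces $\phi(\tau_i, u(\tau_i-)) = \phi(\tau_i, u(\tau_i)) = \bar\phi(\tau_i)$, which is precisely left continuity of $\bar\phi$ at $\tau_i$. The same computation at $\tau_{k+1}=T$ uses the normalization $u(T-)=u(T)$.

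The main obstacle I anticipate is exactly this left continuity at the jump points: the value $\bar\phi(\tau_i)$ is taken with the right-hand value $u(\tau_i)=u(\tau_i+)$, whereas the left limit is computed along $u(\tau_i-)$, and a priori $\phi$ could evaluate these two arguments differently. Since $U$ is merely a metric space, no compactness of $U$ is available to control the maximum, so the resolution must come entirely from the two-sided inequality above, in which maximality is invoked once on the left of $\tau_i$ and once at $\tau_i$. Together with the earlier right continuity, this establishes $\bar\phi \in C^0([0,T],\R)$.
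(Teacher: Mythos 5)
Your proof is correct and takes essentially the same route as the paper: right continuity is immediate from the right continuity of $u$ and the joint continuity of $\phi$, and left continuity follows from the same two-sided squeeze, invoking maximality once at the point itself and once at points to its left. The only cosmetic difference is that you localize the argument at the finitely many corners of a subdivision and apply maximality at $\tau_i$ directly to the element $u(\tau_i-)$, whereas the paper runs the identical pair of inequalities (with $u(t+h)$ and a limit $h \to 0-$) at every $t \in (0,T)$.
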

\begin{proof}
Since $u$ is right continuous on $[0,T)$ and $\phi$ is continuous, $\bar{\phi}$ is reght continuous on $[0,T)$. Since $u$ is left continuous at $T$ and $\phi$ is continuous, we have $\bar{\phi}$ is left continuous at $T$. Now we ought to prove that $\bar{\phi}$ is left continuous on $(0,T)$. Let $t \in (0,T)$; for all $h \in (-t, 0)$, we have $\phi(t, u(t+h)) \leq \phi(t, u(t))$ and $\phi(t + h, u(t)) \leq \phi(t+h, u(t+h))$, and  doing 
$h \rightarrow 0-$, we obtain $\phi(t, u(t-)) \leq \phi(t, u(t))$ and $\phi(t, u(t)) \leq \phi(t, u(t-))$. Hence we have $\phi(t, u(t-)) = \phi(t, u(t))$, i.e. $\bar{\phi}(t-) = \bar{\phi}(t)$.
\end{proof}
If we set $\phi(t, \zeta) := H_M(t, x_0(t), \zeta, p(t))$, from (MP.M) we have $\bar{\phi} = \bar{H}_M$ and the conclusion (CH.M) is proven.
Hence Part (I) of Theorem \ref{th12} is completely proven for the problem of Mayer.
%%%%%%%%%%%%%%%%%%%%%%%%%%%%%%%%%%%%%%%%%%%%%%%%%%%%%%%%%%%%%%
\subsection{Proof of part (II)}
We need of the following result.
\begin{lemma}\label{lem43} 
Let $\phi \in C^0([0,T] \times U, \R)$ such that, for all $(t, \zeta) \in [0,T] \times U$, the partial derivative with respect to the first variable $\partial_1 \phi(t, \zeta)$ exists, and $\partial_1 \phi$ is continuous on $[0,T] \times U$. Let $u \in NPC^0([0,T], U)$ such that $\bar{\phi}(t) := \phi(t, u(t)) = \max_{\zeta \in U} \phi(t, \zeta)$. Then the two following assertions hold.
\begin{itemize}
\item[(i)] When $t$ is a continuity point of $u$, then $\bar{\phi}$ is differentiable at $t$ and we have $\bar{\phi}'(t) = \partial_1 \phi(t, u(t))$.
\item[(ii)] $\bar{\phi} \in PC^1([0,T], \R)$.
\end{itemize}
\end{lemma}
\begin{proof}
From Lemma \ref{lem42} we know that $\bar{\phi} \in C^0([0,T], \R)$. Let $t$ be a continuity point of $u$. For all $h > 0$ small enough, we set $\Delta(h) := \bar{\phi}(t + h) - \bar{\phi}(t)$. We have $\phi(t+h, u(t)) - \phi(t, u(t)) \leq \phi(t+h, u(t+h)) - \phi(t,u(t)) = \Delta(h)$ and $\phi(t+h, u(t+h)) - \phi(t, u(t+h)) \geq \phi(t+h, u(t+h)) - \phi(t, u(t)) = \Delta (h)$. Using a classical theorem of Lagrange for the functions of one real variable (\cite{ATF}, p. 142), we know that there exist $\theta^h_1$ and $\theta^h_2$ in $(0,1)$ such that $\partial_1 \phi(t + \theta^h_1 h, u(t)) h \leq \Delta(h) \leq \partial_1 \phi(t + \theta^h_2 h, u(t + h)) h$ which implies $\partial_1 \phi(t + \theta^h_1 h, u(t)) \leq \frac{1}{h} \Delta(h) \leq \partial_1(t + \theta^h_2 h, u(t+ h))$, and doing $h \rightarrow 0+$ and using the continuity of $\partial_1 \phi$ and the continuity of $u$ at $t$, we obtain $ \lim_{h \rightarrow 0+} \frac{\Delta(h)}{h} = \partial_1 \phi(t, u(t) $. These last inequalities imply that the right derivative $\bar{\phi}'_{R}(t)$ exists and is equal to $\partial_1 \phi(t, u(t))$. Doing a similar reasonning, we obtain that the left derivative $\bar{\phi}'_{L}(t)$ exists and is equal to $\partial_1 \phi(t, u(t))$. Hence assertion (i) is proven. \\
Assertion (ii) is a consequence of assertion (i) using the continuity of $\partial_1 \phi$ and the normalized piecewise continuity of $u$.
\end{proof}
Setting $\phi(t, \zeta) := H_M(t, x_0(t), \zeta, p(t))$, we have $\bar{\phi} = \bar{H}_M$ and Part (II) is a corollary of Lemma \ref{lem43}.
\subsection{Proof of Part (III)}
We proceed by contradiction; if there exists $t_0 \in [0,T]$ such that $p(t_0) = 0$, since (AE) is linear, by using the uniqueness of the solution of the Cauchy problem ((AE), $p(t_0) = 0$), we obtain that $p(t) = 0$ for all $t \in [0,T]$, notably $p(T) = 0$. Hence using (TC), (Si) and (S${\ell}$), (QC, 0) implies that $(\forall \alpha = 0, ..., m, \; \lambda_{\alpha} = 0)$ and $(\forall \beta = 1, ..., q, \mu_{\beta} = 0)$ which is a contradiction with (NN). Hence Part (III) is proven.
%
%%%%%%%%%%%%%%%%%%%%%%%%%%%%%%%%%%%%%%%%%%%%%%%%%%%%%%%%%%%%%%%%%%%%%%%%%%%%%%ù
\section{Proof of the principle for the problem of Bolza}
It is well known that we can transform a problem of Bolza into a problem of Mayer \cite{PB} (p. 393, Chapter 18). We realize such a transformation to deduce Theorem \ref{th11} from Theorem \ref{th12}. We introduce an additional state variable denoted by $\sigma$. We set $X := (\sigma, x) \in \R \times \Omega$ as a new state variable; we set $F(t, (\sigma,x), u) := (f^0(t,x,u), f(t,x,u))$ as the new vectorfield; we set $G^0(\sigma,x) := \sigma + g^0(x)$, $G^{\alpha}(\sigma,x) := g^{\alpha}(x)$ when $\alpha = 1, ...,m$, and we set $H^{\beta}(\sigma, x) := h^{\beta}(x)$ when $\beta = 1, ..., q$. We formulate te new following problem of Mayer:
\[ (\mathcal{MB}) 
\left\{
\begin{array}{cl}
{\rm Maximize} & G^0(X(T))\\
{\rm subject} \; \; {\rm to} & X \in PC^1([0,T], \R \times \Omega), u \in NPC^0([0,T], U)\\
\null & \underline{d}X(t) = F(t, X(t), u(t)), \; X(0) = (0, \xi_0)\\
\null & \forall \alpha = 1, ..., m, \; \; G^{\alpha}(X(T)) \geq 0 \\
\null & \forall \beta = 1,..., q, \; \; H^{\beta}(X(T)) = 0.
\end{array}
\right.
\]
%%%%%%%%%%%%%%%%%%%%%%%%%%%%%%%%%%%%%%%%%%%%%%%
\subsection{Proof of Part (I)}
We denote by $\varpi_1 : \R \times E \rightarrow \R$ and by $\varpi_2 : \R \times E \rightarrow E$ the two projections. 
\vskip1mm
\noindent
When $(x,u)$ is an admissible process for $(\mathcal{B})$, setting $\sigma(t) := \int_0^t f(s, x(s),u(s)) ds$, we see that $((\sigma,x), u)$ is an admissible process for $(\mathcal{MB})$ and we have $G^0((\sigma,x))(T) = \int_0^T f^0(t,x(t), u(t)) dt + g^0(x(T))$. Conversely when $(X,u)$ is an admissible process for $(\mathcal{MB})$, setting $x := \varpi_2 \circ X$, we see that $(x,u)$ is an admissible process for $(\mathcal{B})$, and setting $\sigma := \varpi_1 \circ X$, we have $\int_0^T f^0(t,x(t), u(t)) dt + g^0((x(T)) = \sigma (T) + g^0(x(T)) = G^0(X(T))$. Hence since $(x_0,u_0)$ is optimal for $(\mathcal{B})$, we obtain that $(X_0, u_0) = ((\sigma_0,x_0), u_0)$ is optimal for $(\mathcal{MB})$. The assumptions of Theorem \ref{th11} imply that the assumptions of Theorem \ref{th12} are fulfilled, where $(\mathcal{M})$ is replaced by $(\mathcal{MB})$. Hence there exist $(\Lambda_{\alpha})_{0 \leq \alpha \leq m} \in \R^{1+ m}$, $(M_{\beta})_{1 \leq \beta \leq q} \in \R^q$ and $P \in PC^1([0,T], (\R \times E)^*)$ such that the conclusions of Theorem \ref{th12} hold.
\vskip1mm
\noindent
When $P \in (\R \times E)^*$, we define $p_0 \in \R$ and $p \in E^*$ by setting $p_0 := P(1,0)$ and $p \xi := P(0, \xi)$ for all $\xi \in E$, and so we have $P(r, \xi) = p_0 r + p \xi$ for all $(r, \xi) \in \R \times E$. The Hamiltonian of $(\mathcal{MB})$ is $H_M(t, (\sigma,x), u, (p_0,p)) := (p_0,p) F(t, (\sigma, x), u) = p_0 f^0(t,x,u) + p f(t,x,u)$. The conclusions of Theorem \ref{th12} provide the following conditions.
\begin{itemize}
\item[(i)] $(\Lambda_{\alpha})_{0 \leq \alpha \leq m}$ and $(M_{\beta})_{1 \leq \beta \leq q}$ are not simulteanously equal to zero.
\item[(ii)] $\forall \alpha = 0, ..., m$, $\Lambda_{\alpha} \geq 0$.
\item[(iii)] $\forall \alpha = 1, ..., m$, $\Lambda_{\alpha} G^{\alpha}(X_0(T)) = 0$.
\item[(iv)] $P(T) = \sum_{\alpha = 0}^m \Lambda_{\alpha} DG^{\alpha}(X_0(T)) + \sum_{\beta = 1}^q M_{\beta} D H^{\beta}(X_0(T))$.
\item[(v)] $\underline{d}P(t) = - D_2H_M(t, X_0(t), u_0(t), P(t))$ for all $t \in |0,T]$.
\item[(vi)] $H_M(t, X_0(t), u_0(t), P(t)) \geq H_M(t, X_0(t), \zeta, P(t))$ for all $t \in [0,T]$ and for all $\zeta \in U$.
\item[(vii)] $[t \mapsto H_M(t, X_0(t), u_0(t), P(t))] \in PC^1([0,T], \R)$.
\end{itemize}
We set $\lambda_{\alpha} := \Lambda_{\alpha}$ for all $\alpha = 0, ..., m$, and $\mu_{\beta} := M_{\beta}$ for all $\beta = 1,...,q$.
Hence (i) and (ii) imply that (NN) and (Si) of Theorem \ref{th11} hold. From (iii) we obtain $\lambda_{\alpha} g^{\alpha}(x_0(T)) = 0$ for all $\alpha = 1, ..., m$, and so (S${\ell}$) of Theorem \ref{th11} holds.
\vskip1mm
\noindent
About the partial differentials, note that we have, for the partial differentials with respect to the first variable: $D_1 G^0(\sigma,x_0(T)) = id_{\R}$, $D_1 G^{\alpha}(\sigma,x_0(T)) = 0$ when $\alpha = 1, ...,m$, $D_1 H^{\beta}(\sigma,x_0(T)) = 0$ when $\beta = 1, ...,q$, and for the partial differentials with respect to the second variable: $D_2 G^0(\sigma,x_0(T)) = Dg^0(x_0(T))$, $D_2 G^{\alpha}(\sigma, x_0(T)) = D g^{\alpha}(x_0(T))$ when $\alpha = 1, ...,m$, and $D_2 H^{\beta}(\sigma, x_0(T))= D h^{\beta}(x(T))$ when $\beta = 1, ...,q$. Hence from (iv) we deduce the two following relations.
\begin{equation}\label{eq51}
p_0(T) = \lambda_0.
\end{equation}
\begin{equation} \label{eq52}
p(T) = \sum_{\alpha = 0}^m \lambda_{\alpha} Dg^{\alpha}(x_0(T)) + \sum_{\beta = 1}^q \mu_{\beta}  D h^{\beta}(x_0(T)).
\end{equation}
This last equatility is just the conclusion (TC) of Theorem \ref{th11}.
\vskip1mm
\noindent
From (v) we obtain that $\underline{d}p_0(t) = 0$ for all $t \in [0,T]$, and then using (\ref{eq51}) we have the following relation.
\begin{equation}\label{eq53}
\forall t \in [0,T], \; \; p_0(t) = \lambda_0.
\end{equation}
From (v) we also deduce that, for all $t \in [0,T]$, we have\\
$\underline{d} p(t) = \lambda_0 D_2 f^0(t, x_0(t), u_0(t)) + p(t) D_2 f(t, x_0(t), u_0(t))$ which is (AE.B) of Theorem \ref{th11}.
\vskip1mm
\noindent
From (vi) we deduce that, for all $t \in [0,T]$ and for all $\zeta \in U$, we have \\
$\lambda_0 f^0(t, x_0(t), u_0(t)) + p(t) f(t, x_0(t), u_0(t)) \geq \lambda_0 f^0(t, x_0(t), \zeta) + p(t) f(t, x_0(t), \zeta)$ which is the conclusion (MP.B) of Theorem \ref{th11}.\\
From (vii),  since $H_M(t, X_0(t), u_0(t), P(t)) = H_B(t, x_0(t), u_0(t), p(t), \lambda_0)$ we obtain (CH.B). 
Hence Part (I) of Theorem \ref{th11} is completely proven.
\subsection{Proof of Part (II)}
Using Part (II) of Theorem \ref{th12} on ($\mathcal{MB}$), the existence and the continuity of $\partial_1f^0$ and of $\partial_1f$ imply the existence and the continuity of $\partial_1F$. We obtain that  $[t \mapsto H_B(t, x_0(t), u_0(t), p(t), \lambda_0)  = H_M(t, X_0(t), u_0(t), P(t))] \in PC^1([0,T], \R)$, and when $t$ is a continuity point of $u_0$, we have $\bar{H}'_B(t) = \bar{H}'_M(t) = \lambda_0 \partial_1f^0(t, x_0(t), u_0(t)) + p(t) \partial_1 f(t, x_0(t), u_0(t))$. Hence Part (II) is proven.
\subsection{Proof of Part (III)} We procced by contradiction assuming that there exists $t_* \in [0,T]$ such $(\lambda_0, p(t_*)) = (0,0)$. Since $\lambda_0 = 0$, (AE.B) becomes an homogeneous linear equation, and using the uniqueness of the cauchy problem ((AE.B), $p(t_*) = 0$), we obtain that $p$ is equal to zero on $[0,T]$, notably we have $p(T) = 0$. Hence using (TC), (Si), (S${\ell}$), (QC, 1) implies that $(\forall \alpha = 1,...,m, \lambda_{\alpha} = 0)$ and $(\forall \beta = 1,...,q, \mu_{\beta} = 0)$. Since $\lambda_0 = 0$, we have $(\forall \alpha = 0,...,m, \lambda_{\alpha} = 0)$ and $(\forall \beta = 1,...,q, \mu_{\beta} = 0)$ which is a contradiction with (NN).
\end{document}